\newtheorem{theorem}{Theorem}
\newtheorem{lemma}[theorem]{Lemma}
\newtheorem{proposition}[theorem]{Proposition}
\theoremstyle{definition}
\newtheorem{remark}[theorem]{Remark}
\numberwithin{equation}{section}
\newcommand{\RR}{\mathbb R}
\newcommand{\NN}{\mathbb N}
\newcommand{\ve}{\varepsilon}
\begin{document}


\title[On multiple solutions via $\nabla$-theorems]{On multiple solutions for nonlocal fractional problems via $\nabla$-theorems}

\thanks{The first and the third author were supported by the INdAM-GNAMPA Project 2015 {\it Modelli ed equazioni non-local di tipo frazionario}. The second author is a member of GNAMPA and is supported by the MIUR National Research Project {\it Variational and perturbative aspects of nonlinear differential problems}. The third author was supported by the MIUR National Research Project {\it Variational and Topological Methods in the Study of Nonlinear
Phenomena} and by the ERC grant $\epsilon$ ({\it Elliptic Pde's and
Symmetry of Interfaces and Layers for Odd Nonlinearities}).}

\author[G. Molica Bisci]{Giovanni Molica Bisci}
\address{Dipartimento PAU,
          Universit\`a `Mediterranea' di Reggio Calabria,
          Via Melissari 24, 89124 Reggio Calabria, Italy}
\email{\tt gmolica@unirc.it}

\author[D. Mugnai]{Dimitri Mugnai}
\address{Dipartimento di Matematica e Informatica,
          Universit\`a degli Studi di Perugia,
          Via Vanvitelli, 1 06123 Perugia, Italy}
\email{\tt dimitri.mugnai@unipg.it}

\author[R. Servadei]{Raffaella Servadei}
\address{Dipartimento di Scienze di Base e Fondamenti (DiSBeF),
 Universit\`a degli Studi di Urbino `Carlo Bo',
 Piazza della Repubblica, 13
 61029 Urbino (Pesaro e Urbino)
 ITALY }
\email{\tt raffaella.servadei@uniurb.it}

\keywords{Fractional Laplacian, variational methods, $\nabla$-theorems, $\nabla$-condition, superlinear and subcritical nonlinearities.\\
\phantom{aa} 2010 AMS Subject Classification: Primary: 35J20, 35S15;
Secondary: 47G20, 45G05.}


\begin{abstract}
The aim of this paper is to prove multiplicity of solutions for
nonlocal fractional equations modeled by
$$ \left\{
\begin{array}{ll}
(-\Delta)^s u-\lambda u=f(x,u) & {\mbox{ in }} \Omega\\
u=0 & {\mbox{ in }} \RR^n\setminus \Omega\,,
\end{array} \right.
$$
where $s\in (0,1)$ is fixed, $(-\Delta)^s$ is the fractional Laplace operator, $\lambda$ is a real parameter, $\Omega\subset \RR^n$,  $n>2s$, is an open bounded set with continuous boundary and nonlinearity $f$ satisfies natural superlinear and subcritical growth assumptions. Precisely, along the paper we prove the existence of at least three non-trivial solutions for this problem in a suitable left neighborhood of any eigenvalue of $(-\Delta)^s$. At this purpose we employ a variational theorem of mixed type (one of the so-called $\nabla$-theorems).
\end{abstract}

\maketitle

\tableofcontents

\section{Introduction}\label{sec:introduzione}

Classical critical point theorems, like the Mountain Pass
Theorem, the Linking Theorem or the Saddle Point Theorem (see \cite{ar, puccirad, rabinowitz, struwe}),
have been extensively used in order to construct non-trivial solutions for nonlocal equations of the type
$$ \left\{
\begin{array}{ll}
(-\Delta)^s u=f(x,u) & {\mbox{ in }} \Omega\\
u=0 & {\mbox{ in }} \RR^n\setminus \Omega\,
\end{array} \right.
$$
under different growth assumptions on $f$  (see, e.g., \cite{colorado, barrioscoloradoservadei,capella, fiscella, fsvBS, molicaservadei, sY, sBNRES, svmountain, svlinking, servadeivaldinociBN, servadeivaldinociBNLOW, servadeivaldinociCFP, tan} and references therein, and \cite{cabretan} for a minimization procedure).
Here, $s\in (0,1)$ is fixed
and $(-\Delta)^s$ is the fractional Laplace operator,
which (up to normalization factors) may be defined as
\begin{equation} \label{2}
-(-\Delta)^s u(x)=
\int_{\RR^n}\frac{u(x+y)+u(x-y)-2u(x)}{|y|^{n+2s}}\,dy\,,
\,\,\,\,\, x\in \RR^n\,.
\end{equation}

The aim of this paper is to focus on the existence of multiple solutions for this kind of problems, in the case when $f$ is a superlinear and subcritical nonlinearity. Precisely, we will study the following problem:
\begin{equation}\label{problemaK}
\left\{
\begin{array}{ll}
\mathcal L_K u+\lambda u+f(x,u)=0 & {\mbox{ in }} \Omega\\
u=0 & {\mbox{ in }} \RR^n\setminus \Omega.
\end{array} \right.
\end{equation}
Here $s\in (0,1)$ is fixed, $n>2s$, $\Omega\subset \RR^n$ is an open bounded set with
continuous boundary, and the non-local operator $\mathcal L_K$ is defined as
\begin{equation}\label{lk}
\mathcal L_Ku(x)=
\int_{\RR^n}\Big(u(x+y)+u(x-y)-2u(x)\Big)K(y)\,dy\,,
\,\,\,\,\, x\in \RR^n,
\end{equation}
and $K:\RR^n\setminus\{0\}\rightarrow(0,+\infty)$ is a function with the properties that
\begin{equation}\label{kernel}
{\mbox{$m K\in L^1(\RR^n)$, where $m(x)=\min \{|x|^2, 1\}$\,;}}
\end{equation}
\begin{equation}\label{kernelfrac}
\mbox{there exists}\,\, \theta>0\,\,
\mbox{such that}\,\, K(x)\geq \theta |x|^{-(n+2s)}\,\,
\mbox{for any}\,\, x\in \RR^n \setminus\{0\}\,.\\
\end{equation}
A model for $K$ is given by $K(x)=|x|^{-(n+2s)}$. In this case $\mathcal L_K$ is the fractional Laplace operator $-(-\Delta)^s$\,.

In the recent papers \cite{fiscellamolicaservadeiCFS, 152, molicaMRL, servadeiKavian} the multiplicity of solutions in the nonlocal fractional setting has been addressed by means of classical critical point theorems in the spirit of the ones cited above.

In \cite{marsac:sns} (see also \cite{MMu, marsac}) Marino and
Saccon introduced new critical point theorems, the so-called \emph{theorems
of mixed type}, or \emph{$\nabla$-theorems}, which allow to get multiplicity results for semilinear elliptic problems (see, for instance, \cite{mms, mug4, mugnaiNODEA, Mug, mugpag, OL, fwang, fwang2, wzz}).

We think that a natural question is whether or not these techniques may be adapted
to the fractional Laplacian setting. One can define a fractional power of the Laplacian using its spectral decomposition: indeed, in \cite{mugpag} the same problem considered along this paper, but for this spectral fractional Laplacian, has been considered. As in \cite{mugpag}, the purpose of this paper is to investigate the existence of multiple weak solutions for~\eqref{problemaK}.

A weak solution of \eqref{problemaK} is a solution of the following problem:
\begin{equation}\label{problemaKweak}
\left\{\begin{array}{l}
{\displaystyle \int_{\RR^n \times \RR^n } (u(x)-u(y))(\varphi(x)-\varphi(y))K(x-y) dx\,dy-\lambda \int_\Omega u(x)\varphi(x)\,dx}\\
\qquad \qquad \qquad \qquad \qquad \qquad \qquad \qquad
{\displaystyle = \int_\Omega f(x,u(x))\varphi(x)dx}\,\,\,\,\,\,\, \forall\,\, \varphi \in X_0\\
\\
u\in X_0
\end{array}\right.
\end{equation}
(for this see \cite[Lemma~5.6]{sv} and \cite[footnote~3]{servadeivaldinociBN}). Here $X_0$ is defined as follows: $X$ is the linear space of Lebesgue
measurable functions from $\RR^n$ to $\RR$ such that the restriction
to $\Omega$ of any function $g$ in $X$ belongs to $L^2(\Omega)$ and
$$\mbox{the map}\,\,\,
(x,y)\mapsto (g(x)-g(y))\sqrt{K(x-y)}\,\,\, \mbox{is in}\,\,\,
L^2\big(\RR^n \times \RR^n \setminus ({\mathcal C}\Omega\times
{\mathcal C}\Omega), dxdy\big)\,,$$
where ${\mathcal C}\Omega:=\RR^n \setminus\Omega$.
Moreover, $$X_0=\{g\in X : g=0\,\, \mbox{a.e. in}\,\,
\RR^n\setminus
\Omega\}\,.$$

As we said here above, we suppose that equation~\eqref{problemaK} is superlinear and subcritical, that is its right-hand side $f:\Omega\times \RR\to \RR$ verifies the following conditions:
\begin{equation}\label{caratheodory}
f\,\, \mbox{is a Carath\'eodory function};
\end{equation}
\begin{equation}\label{crescita}
\begin{aligned}
&\mbox{there exist}\,\, a_1, a_2>0\,\,\mbox{and}\,\, q\in (2, 2^*), 2^*=2n/(n-2s)\,,\,\, \mbox{such that}\\
&\qquad \qquad |f(x,t)|\le a_1+a_2|t|^{q-1}\,\, \mbox{a.e.}\,\, x\in \Omega, t\in \RR\,;
\end{aligned}
\end{equation}
\begin{equation}\label{a3a4}
\begin{aligned}
\mbox{there exist}\,\, & \mbox{two positive constants}\,\, a_3\,\,
\mbox{and}\,\, a_4\,\, \mbox{such that} \\
& F(x,t)\geq a_3|t|^q-a_4\,\, \mbox{a.e.}\,\, x\in \Omega, t\in \RR;
\end{aligned}
\end{equation}
\begin{equation}\label{cond0}
{\displaystyle \lim_{|t|\to 0}\frac{f(x,t)}{|t|}=0}\,\, \mbox{uniformly in}\,\, x\in \Omega\,;
\end{equation}
\begin{equation}\label{mu0}
0<q F(x,t)\le tf(x,t)\,\,\mbox{a.e.}\,\, x\in \Omega, t\in \RR\setminus\{0\},
\end{equation}
where $q$ is given in \eqref{crescita} and the function $F$ is
the primitive of $f$ with respect to the second variable, that is
\begin{equation}\label{F}
{\displaystyle F(x,t):=\int_0^t f(x,\tau)d\tau}\,.
\end{equation}

As a model for $f$ we can take the function $f(x,t)=a(x)|t|^{q-2}t$, with $a\in L^\infty(\Omega)$, $\inf_\Omega a>0$ and $q\in (2, 2^*)$. The exponent $2^*$ here plays the role of a
fractional critical Sobolev exponent (see, e.g. \cite[Theorem~6.5]{valpal}).

\begin{remark}
As remarked in \cite{dmnodea2}, condition \eqref{a3a4} is not a mere consequence of \eqref{mu0}, and must be assumed {\em a priori}, unless $f:\bar \Omega \times \RR\to \RR$ is continuous and \eqref{crescita} holds for every $(x,t)\in \bar \Omega \times \RR$.
\end{remark}

The main result of the present paper can be stated as follows:

\begin{theorem}\label{thmain}
Let $s\in (0,1)$, $n>2s$ and $\Omega$ be an open bounded subset of
$\RR^n$ with continuous boundary. Let
$K:\RR^n\setminus\{0\}\rightarrow(0,+\infty)$ be a function
satisfying \eqref{kernel} and \eqref{kernelfrac} and let $f:\Omega\times \RR \to \RR$ be a
function verifying \eqref{caratheodory}--\eqref{mu0}.

Then, for every eigenvalue $\lambda_k$ of $-\mathcal L_K$ with homogeneous Dirichlet boundary data, there exists a left neighborhood $\mathcal O_k$ of $\lambda_k$ such that problem~\eqref{problemaK} admits at least three non-trivial weak solutions for all $\lambda\in \mathcal O_k$.
\end{theorem}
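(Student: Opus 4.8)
The plan is to obtain the weak solutions of \eqref{problemaK} as critical points of the energy functional $J_\lambda:X_0\to\RR$,
\begin{equation*}
J_\lambda(u)=\frac12\|u\|_{X_0}^2-\frac\lambda2\int_\Omega u^2\,dx-\int_\Omega F(x,u)\,dx,\qquad
\|u\|_{X_0}^2=\int_{\RR^n\times\RR^n}(u(x)-u(y))^2K(x-y)\,dx\,dy,
\end{equation*}
the latter being the natural Gagliardo-type norm on $X_0$. By the growth bound \eqref{crescita} and the fractional embeddings of \cite{sv, valpal}, $J_\lambda$ is well defined and of class $C^1$, and $J_\lambda'(u)=0$ is exactly \eqref{problemaKweak}. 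First I would record the Palais--Smale condition: the Ambrosetti--Rabinowitz-type inequality \eqref{mu0} together with the lower bound \eqref{a3a4} forces every Palais--Smale sequence to be bounded in $X_0$, and the compactness of the embedding $X_0\hookrightarrow L^q(\Omega)$ (subcritical, since $q<2^*$) then upgrades weak to strong convergence.

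Next I would use the spectral decomposition of $-\mathcal L_K$. Let $0<\lambda_1<\lambda_2\le\cdots$ be its Dirichlet eigenvalues and $\{e_j\}$ an $L^2(\Omega)$-orthonormal basis of eigenfunctions, so that $\|e_j\|_{X_0}^2=\lambda_j$ (the framework is set up in \cite{sv, svlinking}). Fixing $\lambda_k$, I would split $X_0=\HH^-\oplus\HH^0\oplus\HH^+$, where the three summands are spanned by the eigenfunctions with eigenvalue smaller than, equal to, and larger than $\lambda_k$; the first two are finite-dimensional. The reason for choosing $\lambda$ in a \emph{left} neighborhood of $\lambda_k$ is the sign structure of the quadratic part $Q_\lambda(u)=\tfrac12\|u\|_{X_0}^2-\tfrac\lambda2\int_\Omega u^2$: as soon as $\lambda$ exceeds the largest eigenvalue below $\lambda_k$ and $\lambda<\lambda_k$, one has $Q_\lambda<0$ on $\HH^-\setminus\{0\}$ and $Q_\lambda(u)\ge\tfrac{\lambda_k-\lambda}{2\lambda_k}\|u\|_{X_0}^2>0$ on $\HH^0\oplus\HH^+$, the coefficient on $\HH^0$ shrinking to $0$ as $\lambda\nearrow\lambda_k$.

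With this in hand I would set up the min--max geometry required by the $\nabla$-theorem of Marino--Saccon \cite{marsac:sns}. Using \eqref{cond0} to absorb the quadratic part of $F$ near the origin and \eqref{crescita} for the higher-order control, I would show that on a small sphere $\{u\in\HH^0\oplus\HH^+:\|u\|_{X_0}=\rho\}$ one has $J_\lambda\ge\alpha>0$, provided $\lambda_k-\lambda$ is larger than the chosen $\varepsilon$ and $\rho$ is correspondingly small; meanwhile the superlinearity coming from \eqref{a3a4} drives $J_\lambda\to-\infty$ along every ray and, together with $F\ge 0$ (a consequence of \eqref{mu0}), gives $J_\lambda\le 0$ on the finite-dimensional part $\HH^-$. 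Calibrating these estimates produces a cylinder $T$ and a linking sphere $\Sigma$ with $\sup_{\partial T}J_\lambda<\inf_\Sigma J_\lambda$. It then remains to verify the $\nabla(J_\lambda,\HH^-\oplus\HH^0,\HH^+)$-condition, which I expect to follow from the coercivity of $Q_\lambda$ on $\HH^+$ combined with the compactness already exploited for (PS): the $\HH^+$-projection of $J_\lambda'(u)$ cannot vanish while $u$ stays close to the finite-dimensional subspace and $J_\lambda$ lies in the relevant strip.

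Finally I would invoke the $\nabla$-theorem to produce two critical points whose levels lie in $[\inf_\Sigma J_\lambda,\sup_T J_\lambda]\subset(0,+\infty)$; since $J_\lambda(0)=0$ is strictly below this range, both are non-trivial. A third non-trivial solution would come from the local linking of $J_\lambda$ at the origin---positivity of $Q_\lambda$ on $\HH^0\oplus\HH^+$, negativity on $\HH^-$, and the mountain-pass structure generated by the superlinear term---its level being separated from the previous two so that a localization/level argument guarantees the three solutions are distinct. I expect the verification of the $\nabla$-condition to be the main obstacle: unlike the (PS) and linking parts, it is the genuinely new ingredient and must be transported to the nonlocal functional, where the singular Gagliardo kernel forces one to rely on the Hilbert-space structure of $X_0$ and on the compact embedding rather than on pointwise elliptic estimates. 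A secondary delicate point is that the strict min--max inequalities survive only for $\lambda$ sufficiently close to $\lambda_k$, which is precisely the origin of the left neighborhood $\mathcal O_k$.
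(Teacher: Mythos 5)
Your overall strategy coincides with the paper's: the same functional, the same three-way spectral splitting $X_1=\mathbb H_{k-1}$, $X_2=\mathrm{span}\{e_k,\dots,e_{k+m-1}\}$, $X_3=\mathbb P_{k+m-1}$, the Palais--Smale condition, the sphere--torus linking geometry, and the Marino--Saccon theorem for two solutions plus a classical linking for a third. However, there is a genuine gap at the point you yourself flag as the main obstacle. The condition required by Theorem~\ref{thmarinosaccon} is $(\nabla)(\mathcal J_\lambda, X_1\oplus X_3,a,b)$: it concerns the projection of $\nabla\mathcal J_\lambda$ onto $\mathbb H_{k-1}\oplus\mathbb P_{k+m-1}$ (the orthogonal complement of the $\lambda_k$-eigenspace, an infinite-dimensional subspace) at points close to \emph{that} subspace --- not, as you write, the $\HH^+$-projection at points close to the finite-dimensional subspace $\HH^-\oplus\HH^0$. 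Consequently your proposed justification, ``coercivity of $Q_\lambda$ on $\HH^+$,'' does not address the actual statement: what must be excluded are constrained critical points of $\mathcal J_\lambda$ on $\mathbb H_{k-1}\oplus\mathbb P_{k+m-1}$ with small positive energy, and on that subspace the quadratic form is indefinite (negative on $\mathbb H_{k-1}$, positive on $\mathbb P_{k+m-1}$) while the nonlinearity is superlinear, so no coercivity is available. The paper's argument (Lemma~\ref{lemmanabla1}) tests the constrained equation with $v-w$, where $u=v+w$ is the decomposition along $\mathbb H_{k-1}$ and $\mathbb P_{k+m-1}$, and uses the spectral gap on \emph{both} sides, $\lambda\in[\lambda_{k-1}+\sigma,\lambda_{k+m}-\sigma]$, to obtain $\|f(\cdot,u)\|_{L^{q/(q-1)}(\Omega)}\gtrsim\|u\|_{X_0}$; combined with \eqref{mu0} and a compactness argument this forces $u\equiv0$ whenever $\mathcal J_\lambda(u)$ is close to $0$. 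A companion lemma (Lemma~\ref{lemmanabla2}) giving boundedness of the relevant almost-critical sequences is also needed. Neither step appears in your sketch, and the heuristic you offer would not produce them.

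A second, smaller gap is the separation of the third solution. You propose a local linking at the origin with an unspecified ``localization/level argument''; the paper instead produces $u_3$ by the classical Linking Theorem at a level $\mathcal J_\lambda(u_3)\ge\inf_{\{u\in\mathbb P_{k-1},\,\|u\|=\varrho\}}\mathcal J_\lambda(u)\ge\beta>0$ and then invokes the key quantitative fact (Lemma~\ref{limite}) that $\sup_{u\in\mathbb H_{k+m-1}}\mathcal J_\lambda(u)\to0$ as $\lambda\to\lambda_k^-$. This lemma does double duty: it allows one to take $b=\varepsilon''<\varepsilon_\sigma$ in the $\nabla$-theorem, so that the two solutions it yields have levels at most $\sup_{\mathbb H_{k+m-1}}\mathcal J_\lambda$, and it pushes those levels strictly below the linking level of $u_3$, which is what makes the three solutions pairwise distinct and is the true source of the left neighborhood $\mathcal O_k$ (the sphere--torus geometry of Proposition~\ref{propgeometria} by itself holds for every $\lambda\in(\lambda_{k-1},\lambda_k)$). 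Without an analogue of this lemma your argument cannot guarantee that the third critical point differs from the first two.
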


In the non-local framework, the simplest example we can deal with is
given by the fractional Laplacian, according to the following
result:
\begin{theorem}\label{thlapfrac}
Let $s\in (0,1)$, $n>2s$ and $\Omega$ be an open bounded subset of $\RR^n$
with continuous boundary.
If $f:\Omega\times \RR \to \RR$ is a function verifying \eqref{caratheodory}--\eqref{mu0}, then the problem
\begin{equation}\label{problema}
\left\{
\begin{array}{ll}
(-\Delta)^s u-\lambda u=f(x,u) & {\mbox{ in }} \Omega\\
u=0 & {\mbox{ in }} \RR^n\setminus \Omega
\end{array} \right.
\end{equation}
admits at least three non-trivial weak solutions belonging to $H^s(\RR^n)$ in a suitable left neighborhood of any eigenvalue of $(-\Delta)^s$ with homogeneous Dirichlet boundary data.
\end{theorem}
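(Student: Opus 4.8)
The plan is to obtain Theorem~\ref{thlapfrac} as a direct specialization of Theorem~\ref{thmain} to the model kernel $K(x)=|x|^{-(n+2s)}$, so that the entire variational machinery (the energy functional on $X_0$, its geometry near the eigenvalues, the Palais--Smale and $\nabla$-type conditions) is inherited without repetition. The first thing to check is that this $K$ is admissible, i.e.\ that it satisfies \eqref{kernel} and \eqref{kernelfrac}. Condition \eqref{kernelfrac} is immediate with $\theta=1$. For \eqref{kernel} I would split $\int_{\RR^n} m(x)K(x)\,dx$ into the unit ball and its complement: on $\{|x|\le 1\}$ one has $m(x)K(x)=|x|^{2-(n+2s)}$, which is integrable since $2-2s>0$ because $s<1$; on $\{|x|>1\}$ one has $m(x)K(x)=|x|^{-(n+2s)}$, which is integrable at infinity since $2s>0$. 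Hence $mK\in L^1(\RR^n)$, and Theorem~\ref{thmain} applies to this kernel.

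The second step is the bookkeeping identifying the two problems. As recalled after \eqref{lk}, for $K(x)=|x|^{-(n+2s)}$ the operator $\mathcal L_K$ coincides with $-(-\Delta)^s$, so that \eqref{problemaK} becomes $-(-\Delta)^s u+\lambda u+f(x,u)=0$ in $\Omega$, that is $(-\Delta)^s u-\lambda u=f(x,u)$, which is exactly \eqref{problema}; correspondingly the eigenvalues of $-\mathcal L_K$ with homogeneous Dirichlet data are precisely those of $(-\Delta)^s$. Thus Theorem~\ref{thmain} produces, for each eigenvalue $\lambda_k$, a left neighborhood $\mathcal O_k$ such that \eqref{problema} has at least three non-trivial weak solutions $u\in X_0$ for every $\lambda\in\mathcal O_k$.

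The only point requiring a separate argument, and the one I would regard as the genuine (though mild) obstacle, is upgrading the membership $u\in X_0$ to $u\in H^s(\RR^n)$ as asserted in the statement. For the model kernel the two spaces are directly comparable: if $u\in X_0$ then $u\equiv 0$ a.e.\ in $\mathcal C\Omega$, so $u(x)-u(y)=0$ on $\mathcal C\Omega\times\mathcal C\Omega$, whence
$$\iint_{\RR^n\times\RR^n}\frac{|u(x)-u(y)|^2}{|x-y|^{n+2s}}\,dx\,dy=\iint_{(\RR^n\times\RR^n)\setminus(\mathcal C\Omega\times\mathcal C\Omega)}\frac{|u(x)-u(y)|^2}{|x-y|^{n+2s}}\,dx\,dy<+\infty,$$
the finiteness being precisely the defining property of $X$. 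Combining this with $u\in L^2(\RR^n)$, which follows from $u\in L^2(\Omega)$ and $u=0$ outside $\Omega$, yields that the full Gagliardo seminorm of $u$ over $\RR^n\times\RR^n$ is finite, i.e.\ $u\in H^s(\RR^n)$. This identification, together with the verification of the kernel hypotheses and of the sign conventions in the weak formulation \eqref{problemaKweak}, completes the deduction from Theorem~\ref{thmain}.
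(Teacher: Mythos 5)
Your proposal is correct and is exactly the argument the paper intends (the paper leaves Theorem~\ref{thlapfrac} as an immediate specialization of Theorem~\ref{thmain} to $K(x)=|x|^{-(n+2s)}$, having already noted that $\mathcal L_K=-(-\Delta)^s$ for this kernel): the verification of \eqref{kernel}--\eqref{kernelfrac}, the sign bookkeeping, and the observation that $X_0\subset H^s(\RR^n)$ for the model kernel are all as in the cited Servadei--Valdinoci framework. No further comment is needed.
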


When $s=1$, problem~\eqref{problema} reduces to a standard semilinear Laplace equation:
in this sense Theorem~\ref{thlapfrac} may be seen as the fractional version
of the result in \cite[Theorem~1]{mugnaiNODEA}.

We prove Theorem~\ref{thmain} employing variational and topological methods. Precisely, we apply a $\nabla$-theorem due to Marino and Saccon in \cite{marsac:sns}, see Theorem~\ref{thmarinosaccon} below. The main difficulties in applying such a theorem are obviously related to the nonlocal nature of the problem.

The paper is organized as follows. In Section~\ref{sec:preliminary} we collect the notation and some
preliminary observations. In Section~\ref{sec:compactness} we discuss the compactness property of the Euler-Lagrange functional associated with the problem under consideration, while  Section~\ref{sec:geometry} is devoted to its geometric structure. In Section~\ref{sec:nablacondition}
we prove the $\nabla$-condition, which is one of the main ingredient of the critical point theorem we employ in order to get our multiplicity result.
Finally, in Section~\ref{sec:proofthmain} we prove Theorem~\ref{thmain}.

\section{Preliminaries}\label{sec:preliminary}
In this section we give some preliminary results.

\subsection{Variational setting}\label{subsec:variational}
First of all, we need some notations. In the sequel we endow the space~$X_0$ with the norm defined as (see \cite[Lemma~6]{svmountain})
\begin{equation}\label{norma}
\|g\|_{X_0}=\Big(\int_{\RR^n\times \RR^n} |g(x)-g(y)|^2K(x-y)dx\,dy\Big)^{1/2},
\end{equation}
which is obviously related to the so-called \emph{Gagliardo norm}
\begin{equation}\label{gagliardonorm}
\|g\|_{H^s(\Omega)}=\|g\|_{L^2(\Omega)}+
\Big(\int_{\Omega\times
\Omega}\frac{\,\,\,|g(x)-g(y)|^2}{|x-y|^{n+2s}}\,dx\,dy\Big)^{1/2}
\end{equation}
of the usual fractional
Sobolev space $H^s(\Omega)$.
For further details on the fractional Sobolev spaces we refer to~\cite{adams, valpal, molicalibro}
and to the references therein.

Problem~\eqref{problemaKweak} has a variational structure: indeed, it is the Euler-Lagrange equation of the functional~$\mathcal J_\lambda:X_0\to \RR$ defined as
$$\mathcal J_\lambda(u)=\frac 1 2 \int_{\RR^n\times \RR^n}|u(x)-u(y)|^2 K(x-y)\,dx\,dy-\frac \lambda 2 \int_\Omega |u(x)|^2\,dx-\int_\Omega F(x, u(x))dx\,.$$
Note that when functional~$\mathcal J_\lambda$ is Fr\'echet differentiable at $u\in X_0$, we have that for any $\varphi\in X_0$
$$\begin{aligned}
\langle \mathcal J'_\lambda(u), \varphi\rangle & = \int_{\RR^n\times \RR^n} \big(u(x)-u(y)\big)\big(\varphi(x)-\varphi(y)\big)K(x-y)\,dx\,dy-\lambda \int_\Omega u(x)\varphi(x)\,dx\\
& \qquad \qquad \qquad \qquad \qquad \qquad\qquad \qquad \qquad \qquad  -\int_\Omega f(x, u(x))\varphi(x)\,dx\,,
\end{aligned}$$
where we have denoted by $\langle \cdot,\cdot\rangle$ the duality between $X_0'$ and $X_0$.
Thus, critical points of $\mathcal J_\lambda$ are solutions to problem~\eqref{problemaKweak}.
In order to find multiplicity of critical points, we will use the $\nabla$-theorem in the form of Theorem~\ref{thmarinosaccon} (see Section~\ref{sec:proofthmain}).

\subsection{Estimates on the nonlinearity} Here, we recall some estimates on the nonlinear term and its primitive, which will be useful in the sequel.
These estimates are quite standard and do not take into account the non-local
features of the problem. For a proof we refer to \cite[Lemma~3 and Lemma~4]{svmountain}.

By assumptions~\eqref{caratheodory}, \eqref{crescita} and \eqref{cond0} we deduce that
\begin{equation}\label{cond22}
\begin{aligned}
& \mbox{for any}\,\, \varepsilon>0\,\, \mbox{there exists}\,\, C_{\varepsilon}>0\,\, \mbox{such that}\\
& \quad |f(x,t)|\leq 2\varepsilon |t|+qC_{\varepsilon} |t|^{q-1}\,\, \mbox{a.e.}\,\, x\in \Omega, t\in \RR
\end{aligned}
\end{equation}
and so, as a consequence,
\begin{equation}\label{cond22F}
|F(x,t)|\leq \varepsilon\,|t|^2+C_{\varepsilon}\, |t|^{q}\,,
\end{equation}
where $F$ is defined as in \eqref{F}\,. This implies that functional~$\mathcal J_\lambda$ is Fr\'echet differentiable at any point $u\in X_0$.

\subsection{An eigenvalue problem for $-\mathcal L_K$}\label{subsec:eigenvalue}
Along the present paper we consider the following eigenvalue problem associated to the integro-differential operator~$-\mathcal L_K$:
\begin{equation}\label{problemaautovalori}
\left\{\begin{array}{ll}
-\mathcal L_K u=\lambda\, u & \mbox{in } \Omega\\
u=0 & \mbox{in } \RR^n\setminus \Omega\,.
\end{array}\right.
\end{equation}
We denote by $\big\{ \lambda_k\big\}_{{k\in\NN}}$ the sequence of the eigenvalues of the problem~\eqref{problemaautovalori}, with
\begin{equation}\label{lambdacrescente}
0<\lambda_1<\lambda_2\le \dots \le \lambda_k\le \lambda_{k+1}\le \dots
\end{equation}
$${\mbox{$\lambda_k\to +\infty$ as $ k\to +\infty\,,$}}$$
and by $e_k$ the eigenfunction corresponding to $\lambda_k$. Moreover, we normalize $\big\{e_k\big\}_{{k\in\NN}}$ in such a way that
this sequence provides an orthonormal basis of $L^2(\Omega)$ and an orthogonal basis of $X_0$\,. For a complete study of the spectrum of the integro-differential operator~$-\mathcal L_K$ we refer to \cite[Proposition~2.3]{sY}, \cite[Proposition~9 and Appendix~A]{svlinking} and \cite[Proposition~4]{servadeivaldinociBNLOW}\,. In particular, we recall that all eigenfunctions are H\"older continuous up to the boundary of $\Omega$.

Finally, we say that eigenvalue $\lambda_k$, $k\geq 2$, has multiplicity $m\in \NN$ if
$$\lambda_{k-1}<\lambda_k=\dots =\lambda_{k+m-1}<\lambda_{k+m}\,.$$
In this case the set of all the eigenvalues corresponding to $\lambda_k$ agrees with
$$\mbox{span}\left\{e_k,\ldots,e_{k+m-1}\right\}\,.$$

In the following, for any $k\in \NN$, we set
$$\mathbb H_k=\mbox{span}\left\{e_{1},\ldots,e_k\right\}$$
and
$$\mathbb P_k=
\left\{u\in X_{0}:\,\,\left\langle u,e_{j}\right\rangle_{X_{0}}=0\,\,\,\,\mbox{for any}\,\, j=1,\ldots,k \right\},$$
where
\[
\left\langle u,v\right\rangle_{X_{0}}:= \int_{\RR^n\times \RR^n} \big(u(x)-u(y)\big)\big(\varphi(x)-\varphi(y)\big)K(x-y)\,dx\,dy
\]
makes $X_0$ a Hilbert space, see \cite[Lemma~7]{svmountain}. In this way, the variational characterization of the eigenvalues (see \cite[Proposition~9]{svlinking}) implies that
\begin{equation}\label{poincarevincolata}
\int_{\RR^n\times \RR^n}|u(x)-u(y)|^2 K(x-y)\,dx\,dy\geq \lambda_{k+1} \int_\Omega |u(x)|^2\,dx \ \mbox{ for all $u\in \mathbb P_k$},
\end{equation}
while, using the orthogonality properties of the eigenvalues, a standard Fourier decomposition gives
\begin{equation}\label{antipoincare}
\int_{\RR^n\times \RR^n}|u(x)-u(y)|^2 K(x-y)\,dx\,dy\leq \lambda_k \int_\Omega |u(x)|^2\,dx \ \mbox{ for all $u\in \mathbb H_k$}.
\end{equation}

\subsection{Gradient in $X_0$}\label{secgrad}

Being $X_0$ a Hilbert space and $\mathcal J_\lambda$ of class $C^1$, the gradient $\nabla \mathcal J_\lambda$ of $\mathcal J_\lambda$ is immediately defined as
\begin{equation}\label{gradJ}
\begin{aligned}
\langle \nabla \mathcal J_\lambda(u),v\rangle_{X_0} :&=\langle \mathcal J'_\lambda(u), v\rangle\\
& =\langle u,v\rangle_{X_0}-\lambda \int_\Omega u(x)v(x)\,dx-\int_\Omega f(x,u(x))v(x)\,dx
\end{aligned}
\end{equation}
for any $u,v\in X_0$.

Let $\nu \in [1, 2^*]$ and $\nu'$ be its conjugate, that is $1/\nu+1/\nu'=1$.
Introducing the operator $\mathcal L_K^{-1}:L^{\nu'}(\Omega)\to X_0$, defined as
$\mathcal L_K^{-1}g=v$ if and only if $v\in X_0$ solves
\[
\begin{cases}
\mathcal L_Kv=g(x) & \mbox{ in } \Omega,\\
v=0 & \mbox{ in } \RR^n\setminus \Omega,
\end{cases}
\]
it is readily seen that
\begin{equation}\label{grad}
\nabla \mathcal J_\lambda(u)=u-\mathcal L_K^{-1}(\lambda u+f(x,u))
\end{equation}
for all $u\in X_0$. Indeed, setting $w=\mathcal L_K^{-1}(\lambda u+f(x,u))$ and using the definitions of $\mathcal L_K$ and $\mathcal J_\lambda$, for any test function $\varphi\in X_0$ we get that
$$\begin{aligned}
\langle w, \varphi \rangle_{X_0} & = \lambda \int_\Omega u(x)\varphi(x)\,dx +\int_\Omega f(x,u(x))\varphi(x)\,dx\\
& = - \langle \mathcal J_\lambda'(u), \varphi\rangle+\langle u, \varphi \rangle_{X_0}\\
& = -\langle \nabla \mathcal J_\lambda(u),v\rangle_{X_0}+\langle u, \varphi \rangle_{X_0}\,,
\end{aligned}$$
that is
$$w=-\nabla \mathcal J_\lambda(u)+u\,,$$
which gives the assertion.

Moreover, we note that $\mathcal L_K^{-1}$ is a compact operator for all $\nu\in [1,2^*)$. Indeed, if $\{g_n\}_{n\in \NN}$ is bounded in $L^{\nu'}(\Omega)$, standard calculations imply that $\left\{\mathcal L_K^{-1}g_n\right\}_{n\in \NN}$ is relatively compact in~$X_0$.

For further use, we also note that, again using the definition of $\mathcal L_K^{-1}$,
\begin{equation}\label{scarica}
\langle u, \mathcal L_K^{-1}v\rangle_{X_0}=\int_\Omega u(x)v(x)\,dx
\end{equation}
for every $u,v\in X_0$.

\section{Compactness condition}\label{sec:compactness}
In this section we check the validity of the \emph{Palais--Smale condition} for functional~$\mathcal J_\lambda$ at any level, that is we prove that for each $c\in \RR$ any Palais--Smale sequence for $\mathcal J_\lambda$ at level~$c$ admits a subsequence which is strongly convergent in $X_0$. As usual, we say that $\{u_j\}_{j\in \NN}\subset X_0$ is a \emph{Palais--Smale sequence} for $\mathcal J_\lambda$ at level~$c\in \RR$ if
\begin{equation}\label{Jc0}
\mathcal J_\lambda(u_j)\to c
\end{equation}
and
\begin{equation}\label{J'00}
\sup\Big\{ \big|\langle\,\mathcal J'_\lambda(u_j),\varphi\,\rangle \big|\,: \;
\varphi\in
X_0\,, \|\varphi\|_{X_0}=1\Big\}\to 0
\end{equation}
as $j\to +\infty$.

\begin{proposition}\label{lemmaPS}
Let $\lambda>0$ and $f$ be a function satisfying conditions~\eqref{caratheodory}--\eqref{mu0}.

Then, functional~$\mathcal J_\lambda$ satisfies the Palais--Smale condition at any level~$c\in \RR$\,.
\end{proposition}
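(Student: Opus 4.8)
The plan is to prove the Palais--Smale condition in the standard two-stage fashion: first show that any Palais--Smale sequence $\{u_j\}$ is bounded in $X_0$, and then extract a strongly convergent subsequence by exploiting the compactness of $\mathcal L_K^{-1}$ together with the representation \eqref{grad} of the gradient. Throughout I would write $\mathcal J_\lambda(u_j)=c+o(1)$ and $\langle \mathcal J_\lambda'(u_j),u_j\rangle=o(1)\|u_j\|_{X_0}$, the latter coming directly from \eqref{J'00}.

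\textbf{Boundedness.} The key is the superlinearity condition \eqref{mu0}, in the form $qF(x,t)\le tf(x,t)$. First I would form the combination
\[
q\,\mathcal J_\lambda(u_j)-\langle \mathcal J_\lambda'(u_j),u_j\rangle
=\Big(\frac{q}{2}-1\Big)\Big(\|u_j\|_{X_0}^2-\lambda\int_\Omega |u_j|^2\,dx\Big)
+\int_\Omega\big(u_jf(x,u_j)-qF(x,u_j)\big)\,dx .
\]
By \eqref{mu0} the last integral is nonnegative, so the left-hand side, which is bounded above by $qc+o(1)+o(1)\|u_j\|_{X_0}$, dominates $(q/2-1)(\|u_j\|_{X_0}^2-\lambda\|u_j\|_{L^2(\Omega)}^2)$ with $q/2-1>0$. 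To turn this into an $X_0$-bound I must control the term $\lambda\|u_j\|_{L^2(\Omega)}^2$; since $\lambda>0$ this is not automatically absorbed. The clean way is to note that the quantity $\|u_j\|_{X_0}^2-\lambda\|u_j\|_{L^2(\Omega)}^2$ is exactly (twice) the quadratic part of $\mathcal J_\lambda$, and to use the Sobolev/fractional embedding $X_0\hookrightarrow L^2(\Omega)$ to get $\|u_j\|_{L^2(\Omega)}^2\le C\|u_j\|_{X_0}^2$, so that for any fixed $\lambda$ one obtains a coercive lower bound once combined with the growth estimate on $F$ from \eqref{a3a4}. Concretely, I expect to use \eqref{a3a4} to bound $\int_\Omega F(x,u_j)\,dx$ from below by $a_3\|u_j\|_{L^q(\Omega)}^q - a_4|\Omega|$, and then argue that if $\|u_j\|_{X_0}\to\infty$ the negative $L^q$-term in $\mathcal J_\lambda$ would force $\mathcal J_\lambda(u_j)\to-\infty$, contradicting \eqref{Jc0}. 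This is the step I expect to require the most care, since the indefinite sign coming from $\lambda>0$ and the interplay between the $L^2$ and $L^q$ norms must be handled so that the superlinear term wins.

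\textbf{Strong convergence.} Once $\{u_j\}$ is bounded in the Hilbert space $X_0$, up to a subsequence $u_j\rightharpoonup u$ weakly in $X_0$, and by the compact embedding $X_0\hookrightarrow L^\nu(\Omega)$ for $\nu\in[1,2^*)$ we get $u_j\to u$ strongly in $L^q(\Omega)$ and $L^2(\Omega)$ (and a.e., up to a further subsequence). Using \eqref{grad} I would write
\[
u_j=\nabla\mathcal J_\lambda(u_j)+\mathcal L_K^{-1}\big(\lambda u_j+f(x,u_j)\big).
\]
The first summand tends to $0$ in $X_0$ because \eqref{J'00} says $\nabla\mathcal J_\lambda(u_j)\to 0$. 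For the second summand, the growth bound \eqref{crescita} together with $u_j\to u$ in $L^q(\Omega)$ shows that $f(\cdot,u_j)\to f(\cdot,u)$ in $L^{q'}(\Omega)$ (by a standard continuity-of-Nemytskii-operator argument), and $\lambda u_j\to\lambda u$ in $L^2(\Omega)$; hence $\lambda u_j+f(x,u_j)$ converges strongly in $L^{\nu'}(\Omega)$ for an admissible $\nu<2^*$. Since $\mathcal L_K^{-1}$ is continuous (indeed compact) from $L^{\nu'}(\Omega)$ into $X_0$, the image converges strongly in $X_0$. Combining, $u_j$ is the sum of a sequence converging to $0$ and a strongly convergent sequence, so $u_j\to u$ strongly in $X_0$, which is precisely the Palais--Smale condition. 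The only delicate point in this second stage is verifying the strong $L^{\nu'}$-convergence of $f(\cdot,u_j)$, which follows from \eqref{crescita} and the dominated convergence/Vitali argument once a.e. convergence and the $L^q$-bound are in hand.
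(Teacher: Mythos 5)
Your second stage (recovering strong convergence of a bounded Palais--Smale sequence from the identity $u_j=\nabla\mathcal J_\lambda(u_j)+\mathcal L_K^{-1}(\lambda u_j+f(x,u_j))$, the convergence $f(\cdot,u_j)\to f(\cdot,u)$ in $L^{q'}(\Omega)$ and the compactness of $\mathcal L_K^{-1}$) is correct, and is in fact a legitimately different route from the paper's, which instead shows $\|u_j\|_{X_0}\to\|u_\infty\|_{X_0}$ and combines this with weak convergence. The genuine gap is in the boundedness step. By forming $q\,\mathcal J_\lambda(u_j)-\langle\mathcal J'_\lambda(u_j),u_j\rangle$ you use \eqref{mu0} with the extremal coefficient $q$, so the superlinear surplus $\int_\Omega\big(u_jf(x,u_j)-qF(x,u_j)\big)\,dx$ is merely nonnegative and is then thrown away. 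What survives is $(\frac q2-1)\big(\|u_j\|_{X_0}^2-\lambda\|u_j\|_{L^2(\Omega)}^2\big)\le C\big(1+\|u_j\|_{X_0}\big)$, and for $\lambda\ge\lambda_1$ (the relevant case, since in the application $\lambda$ sits near an arbitrary eigenvalue) the quadratic form $\|u\|_{X_0}^2-\lambda\|u\|_{L^2(\Omega)}^2$ is indefinite, so this yields no bound. Neither of your proposed fixes closes the gap: the embedding $\|u_j\|_{L^2(\Omega)}^2\le C\|u_j\|_{X_0}^2$ only produces the possibly negative coefficient $1-\lambda C$; and the claim that $\|u_j\|_{X_0}\to+\infty$ forces $\mathcal J_\lambda(u_j)\to-\infty$ is false, because in infinite dimensions $\|u\|_{L^q(\Omega)}$ is not comparable to $\|u\|_{X_0}$ --- along high-frequency directions $e$ with $\|e\|_{L^q(\Omega)}/\|e\|_{X_0}$ small the positive term $\frac12\|u\|_{X_0}^2$ dominates, and level sets of such superlinear functionals are unbounded.

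The paper avoids this by working with $\mathcal J_\lambda(u_j)-\frac1\mu\langle\mathcal J'_\lambda(u_j),u_j\rangle$ for $\mu\in(2,q)$ strictly: then \eqref{mu0} leaves a positive multiple $(\frac q\mu-1)\int_\Omega F(x,u_j)\,dx\ge a_3(\frac q\mu-1)\|u_j\|_{L^q(\Omega)}^q-C$ by \eqref{a3a4}, and the Young inequality $\|u_j\|_{L^2(\Omega)}^2\le\frac{2\varepsilon}{q}\|u_j\|_{L^q(\Omega)}^q+C_\varepsilon$ lets this surplus absorb the indefinite term $\lambda\|u_j\|_{L^2(\Omega)}^2$, leaving $(\frac12-\frac1\mu)\|u_j\|_{X_0}^2\le C(1+\|u_j\|_{X_0})$. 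Your combination could also be salvaged without changing the coefficient: from $\int_\Omega F(x,u_j)\,dx=\frac12\big(\|u_j\|_{X_0}^2-\lambda\|u_j\|_{L^2(\Omega)}^2\big)-\mathcal J_\lambda(u_j)$ and \eqref{a3a4} one gets $\|u_j\|_{L^q(\Omega)}^q\le C(1+\|u_j\|_{X_0}^2)$, hence $\|u_j\|_{L^2(\Omega)}^2\le C'(1+\|u_j\|_{X_0}^2)^{2/q}=o(\|u_j\|_{X_0}^2)$ because $q>2$; but some argument of this kind must actually be supplied, and your proposal does not contain one.
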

\begin{proof}
Let $c\in \RR$ and let $\{u_j\}_{j\in \NN}$ be a Palais--Smale
sequence for $\mathcal J_\lambda$ at level $c$.
First of all, let us prove that
\begin{equation}\label{ujlimitata}
\mbox{the sequence}\,\,\, \{u_j\}_{j\in \NN} \,\,\, \mbox{is bounded in} \,\,\, X_0\,.
\end{equation}
For this purpose, we note that by \eqref{Jc0} and \eqref{J'00} for any $j\in \NN$  it easily follows
that there exists $\kappa>0$ such that
$$\Big|\langle \mathcal J'_\lambda(u_j), \frac{u_j}{\|u_j\|_{X_0}}\rangle\Big| \leq \kappa\,,$$
and
$$|\mathcal J_\lambda(u_j)|\leq \kappa\,,$$
so that, as a consequence of these two relations, we also have
\begin{equation}\label{kappaPS}
\mathcal J_\lambda(u_j)-\frac 1 \mu \langle \mathcal J'_\lambda(u_j), u_j\rangle\leq \kappa \left(1+ \|u_j\|_{X_0}\right)\,,
\end{equation}
where $\mu$ is a parameter such that $\mu\in (2, q)$\,.

Now, thanks to \eqref{mu0} and \eqref{a3a4} we get
\begin{equation}\label{jj'0L}
\begin{aligned}
\mathcal J_\lambda(u_j)-\frac 1 \mu \langle \mathcal J_\lambda'(u_j), u_j\rangle & = \left(\frac 1 2 -\frac 1 \mu\right)\Big(\|u_j\|_{X_0}^2-\lambda \|u_j\|_{L^2(\Omega)}^2\Big)\\
& \qquad  -\int_\Omega \Big(F(x, u_j(x))-\frac 1 \mu\, f(x, u_j(x)) \,u_j(x)\Big)\,dx\\
& \geq \left(\frac 1 2 -\frac 1 \mu\right)\Big(\|u_j\|_{X_0}^2-\lambda \|u_j\|_{L^2(\Omega)}^2\Big)\\
& \qquad +\left(\frac q \mu-1\right)\int_\Omega F(x, u_j(x))\,dx\\
& \geq \left(\frac 1 2 -\frac 1 \mu\right)\Big(\|u_j\|_{X_0}^2-\lambda \|u_j\|_{L^2(\Omega)}^2\Big)\\
& \qquad +a_3\left(\frac q \mu-1\right)\|u_j\|_{L^q(\Omega)}^q-a_4\left(\frac q \mu-1\right)\,|\Omega|\,.
\end{aligned}
\end{equation}
Note that for any $\varepsilon>0$ the Young inequality gives
\begin{equation}\label{youngPS}
\|u_j\|_{L^2(\Omega)}^2\leq \frac{2\varepsilon}{q}\,\|u_j\|_{L^q(\Omega)}^q+\frac{q-2}{q}\,\varepsilon^{-2/(q-2)}\,|\Omega|\,,
\end{equation}
so that, by \eqref{jj'0L} and \eqref{youngPS}, we obtain that
\begin{equation}\label{young2}
\begin{aligned}
\mathcal J_\lambda(u_j)-\frac 1 \mu \langle \mathcal J_\lambda'(u_j), u_j\rangle &
\geq \left(\frac 1 2 -\frac 1 \mu\right)\|u_j\|_{X_0}^2-\lambda\left(\frac 1 2 -\frac 1 \mu\right) \frac{2\varepsilon}{q}\,\|u_j\|_{L^q(\Omega)}^q\\
& \qquad -\lambda\left(\frac 1 2 -\frac 1 \mu\right) \frac{q-2}{q}\,\varepsilon^{-2/(q-2)}\,|\Omega|\\
& \qquad +a_3\left(\frac q \mu-1\right)\|u_j\|_{L^q(\Omega)}^q-a_4\left(\frac q \mu-1\right)\,|\Omega|\\
& = \left(\frac 1 2 -\frac 1 \mu\right)\|u_j\|_{X_0}^2\\
& \qquad +\Big[a_3\left(\frac q \mu-1\right)-\lambda\left(\frac 1 2 -\frac 1 \mu\right)\frac{2\varepsilon}{q}\Big]\|u_j\|_{L^q(\Omega)}^q-C_\varepsilon\,,
\end{aligned}
\end{equation}
where $C_\varepsilon$ is a constant such that $C_\varepsilon\to +\infty$ as $\varepsilon\to 0$, being $q>\mu>2$\,.

Now, choosing $\varepsilon$ so small that
$$a_3\left(\frac q \mu-1\right)-\lambda\left(\frac 1 2 -\frac 1 \mu\right)\frac{2\varepsilon}{q}>0\,,$$
by \eqref{young2} we get
\begin{equation}\label{jj'20}
\mathcal J_\lambda(u_j)-\frac 1 \mu \langle \mathcal J_\lambda'(u_j), u_j\rangle \geq \left(\frac 1 2 -\frac 1 \mu\right)\|u_j\|_{X_0}^2-C_\varepsilon\,.
\end{equation}

Finally, by \eqref{kappaPS} and \eqref{jj'20} for any $j\in \NN$
$$\|u_j\|_{X_0}^2 \leq \kappa_*\left(1+\|u_j\|_{X_0}\right)$$
for a suitable positive constant $\kappa_*$\,. Hence, assertion~\eqref{ujlimitata} is proved.

Now, let us finish the proof of the Palais--Smale condition for $\mathcal J_\lambda$\,.
Since $\{u_j\}_{j\in \NN}$ is bounded in $X_0$ and $X_0$ is a reflexive space, up to a
subsequence, still denoted by $u_j$, there exists $u_\infty \in X_0$
such that
\begin{equation}\label{convergenze0}
\begin{aligned}
 & \int_{\RR^n\times \RR^n}\big(u_j(x)-u_j(y)\big)\big(\varphi(x)-\varphi(y)\big) K(x-y)\,dx\,dy \to \\
& \qquad \qquad
\int_{\RR^n\times \RR^n}\big(u_\infty(x)-u_\infty(y)\big)\big(\varphi(x)-\varphi(y)\big)
K(x-y)\,dx\,dy  \quad \mbox{for any}\,\, \varphi\in X_0\,,
\end{aligned}
\end{equation}
while, by \cite[Lemma~8]{svmountain}, up to a subsequence,
\begin{equation}\label{convergenze0bis}
\begin{aligned}
& u_j \to u_\infty \quad \mbox{in}\,\, L^\nu(\RR^n)\quad \mbox{for any}\,\,\nu\in [1, 2^*) \\
& u_j \to u_\infty \quad \mbox{a.e. in}\,\, \RR^n
\end{aligned}
\end{equation}
as $j\to +\infty$\,. Finally, by \cite[Theorem~IV.9]{brezis} we know that for any $\nu\in [1, 2^*)$ there exists $\ell_\nu\in L^\nu(\RR^n)$ such that
\begin{equation}\label{dominata20}
|u_j(x)|\leq \ell_\nu(x) \quad \mbox{a.e. in}\,\, \RR^n\,\quad \mbox{for any}\,\,j\in \NN\,.
\end{equation}

By \eqref{crescita}, \eqref{convergenze0bis}, \eqref{dominata20}, the
fact that the map $t\mapsto f(\cdot, t)$ is continuous in $t\in \RR$ (see \eqref{caratheodory})
and the Dominated Convergence Theorem, we get
\begin{equation}\label{convf0}
\int_\Omega f(x, u_j(x))u_j(x)\,dx \to \int_\Omega f(x, u_\infty(x))u_\infty(x)\,dx
\end{equation}
and
\begin{equation}\label{convfu0}
\int_\Omega f(x, u_j(x))u_\infty(x)\,dx \to \int_\Omega f(x, u_\infty(x))u_\infty(x)\,dx
\end{equation}
as $j\to +\infty$. Furthermore, by \eqref{J'00} and \eqref{ujlimitata} we have that
$$\begin{aligned}
 0\leftarrow \langle \mathcal J'_\lambda(u_j), u_j\rangle & = \int_{\RR^n\times \RR^n}|u_j(x)-u_j(y)|^2 K(x-y)\,dx\,dy -\lambda \int_\Omega |u_j(x)|^2\,dx\\
& \qquad \qquad \qquad \qquad \qquad \qquad \qquad - \int_\Omega f(x, u_j(x))u_j(x)\,dx
\end{aligned}$$
so that, by \eqref{convergenze0bis} and \eqref{convf0} we deduce that
\begin{equation}\label{convnorma10}
\int_{\RR^n\times \RR^n}|u_j(x)-u_j(y)|^2 K(x-y)\,dx\,dy\to \lambda \int_\Omega |u_\infty(x)|^2\,dx+\int_\Omega f(x, u_\infty(x))u_\infty(x)\,dx
\end{equation}
as $j\to +\infty$\,, while, by \eqref{J'00}, \eqref{convergenze0} (both with test function $\varphi=u_\infty$), \eqref{convergenze0bis} and \eqref{convfu0}, we get
\begin{equation}\label{convnorma20}
\begin{aligned}
\int_{\RR^n\times \RR^n}|u_\infty(x)-u_\infty(y)|^2 K(x-y)\,dx\,dy & = \lambda \int_\Omega |u_\infty(x)|^2\,dx\\
& \qquad \qquad +\int_\Omega f(x, u_\infty(x))u_\infty(x)\,dx\,.
\end{aligned}
\end{equation}
Hence, \eqref{convnorma10} and \eqref{convnorma20} give that
\begin{equation}\label{convnormaX00}
\|u_j\|_{X_0}\to \|u_\infty\|_{X_0}
\end{equation}
as $j\to +\infty$. With this, it is easy to see that
$$\begin{aligned}
\|u_j-u_\infty\|_{X_0}^2 & = \|u_j\|_{X_0}^2 + \|u_\infty\|_{X_0}^2 -2 \int_{\RR^n\times \RR^n} \big(u_j(x)-u_j(y)\big)\big(u_\infty(x)-u_\infty(y)\big) K(x-y)\,dx\,dy\\
& \to 2\|u_\infty\|_{X_0}^2-2\int_{\RR^n\times \RR^n}|u_\infty(x)-u_\infty(y)|^2 K(x-y)\,dx\,dy=0
\end{aligned}$$
as $j\to +\infty$, thanks to \eqref{convergenze0} and \eqref{convnormaX00}\,.
Then, the proof of Proposition~\ref{lemmaPS} is complete.
\end{proof}

\section{Geometry of the $\nabla$-theorem}\label{sec:geometry}
In this section we check that functional~$\mathcal J_\lambda$ has the geometric
structure required by the $\nabla$-theorem stated in Theorem~\ref{thmarinosaccon} (see Section~\ref{sec:proofthmain}). Precisely, we want to show that if there exist $k$ and $m$ in $\NN$ such that
$$\lambda_{k-1}<\lambda<\lambda_k=\dots = \lambda_{k+m-1}<\lambda_{k+m}$$
and $\lambda$ is sufficiently close to $\lambda_k$, then functional~$\mathcal J_\lambda$ agrees with the geometric framework of Theorem~\ref{thmarinosaccon}, taking
$$\begin{aligned}
& X_1:=\mathbb H_{k-1}\\
& X_2:=\mbox{span}\left\{e_k,\ldots,e_{k+m-1}\right\}\\
& X_3:=\mathbb P_{k+m-1}\,.
\end{aligned}$$

\begin{proposition}\label{propgeometria}
Let $k$ and $m$ in $\NN$ be such that $\lambda_{k-1}<\lambda<\lambda_k=\dots = \lambda_{k+m-1}<\lambda_{k+m}$ and let $f$ be a function satisfying conditions~\eqref{caratheodory}--\eqref{mu0}.

Then, there exist $\rho, R$, with $R>\rho>0$, such that
$${\displaystyle \sup_{\{u\in X_1, \|u\|_{X_0}\leq R\} \cup \{u\in X_1\oplus X_2 : \|u\|=R\}}} \mathcal J_\lambda(u)<{\displaystyle \inf_{\{u\in X_2\oplus X_3 : \|u\|_{X_0}=\rho\}}\mathcal J_\lambda(u)}\,.$$

\end{proposition}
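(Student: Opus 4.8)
The plan is to split the required inequality into two separate estimates matching the two pieces of the supremum on the left, and then to control the infimum on the right from below. Throughout I use the orthogonal decomposition $X_0 = X_1 \oplus X_2 \oplus X_3$ with $X_1 = \mathbb H_{k-1}$, $X_2 = \mathrm{span}\{e_k,\dots,e_{k+m-1}\}$, $X_3 = \mathbb P_{k+m-1}$, together with the spectral inequalities \eqref{poincarevincolata} and \eqref{antipoincare}, the growth bound \eqref{cond22F} on $F$, and the lower bound \eqref{a3a4}. The key numerical fact driving everything is that, since $\lambda_{k-1}<\lambda<\lambda_k$, the quadratic form $u\mapsto \|u\|_{X_0}^2-\lambda\|u\|_{L^2(\Omega)}^2$ is \emph{negative definite} on $X_1\oplus X_2$ (there $\|u\|_{X_0}^2 \le \lambda_k\|u\|_{L^2}^2$ by \eqref{antipoincare}, and $\lambda<\lambda_k$ must be used carefully on the $X_1$ part versus the $X_2$ part) and \emph{positive definite} on $X_3$ (there $\|u\|_{X_0}^2 \ge \lambda_{k+m}\|u\|_{L^2}^2 > \lambda\|u\|_{L^2}^2$).

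First I would estimate $\mathcal J_\lambda$ on the two components of the left-hand supremum. On the finite-dimensional space $X_1$, using \eqref{antipoincare} with index $k-1$ gives $\|u\|_{X_0}^2 \le \lambda_{k-1}\|u\|_{L^2(\Omega)}^2$, so $\|u\|_{X_0}^2 - \lambda\|u\|_{L^2(\Omega)}^2 \le (1-\lambda/\lambda_{k-1})\|u\|_{X_0}^2 \le 0$ since $\lambda>\lambda_{k-1}$; combined with $F\ge a_3|t|^q - a_4$ from \eqref{a3a4}, which makes $-\int_\Omega F\,dx \le a_4|\Omega|$, one obtains $\mathcal J_\lambda(u)\le a_4|\Omega|$ on all of $X_1$, hence on $\{u\in X_1:\|u\|_{X_0}\le R\}$. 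For the sphere piece $\{u\in X_1\oplus X_2:\|u\|_{X_0}=R\}$, on $X_1\oplus X_2$ I use \eqref{antipoincare} with index $k+m-1$ to get $\|u\|_{X_0}^2\le\lambda_k\|u\|_{L^2}^2$, so the quadratic part is $\le (1-\lambda/\lambda_k)\|u\|_{X_0}^2$, which is negative because $\lambda<\lambda_k$. Since $X_1\oplus X_2$ is finite-dimensional, the $L^q$ and $X_0$ norms are equivalent, so the term $-a_3\|u\|_{L^q(\Omega)}^q$ coming from \eqref{a3a4} dominates for large $\|u\|_{X_0}=R$; altogether $\mathcal J_\lambda(u)\le (1-\lambda/\lambda_k)R^2/2 - c\,R^q + a_4|\Omega| \to -\infty$ as $R\to\infty$, so choosing $R$ large makes this piece as negative as we like.

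Next I would bound $\mathcal J_\lambda$ from below on $\{u\in X_2\oplus X_3:\|u\|_{X_0}=\rho\}$ for small $\rho$. The obstacle here is that $X_2\oplus X_3$ is \emph{not} a space on which the quadratic form is positive definite: on the $X_2$ directions $\|u\|_{X_0}^2=\lambda_k\|u\|_{L^2}^2$, and with $\lambda<\lambda_k$ the coefficient $(1-\lambda/\lambda_k)$ is strictly positive, so in fact the quadratic form stays positive definite on all of $X_2\oplus X_3$ — writing $u=u_2+u_3$ with $u_2\in X_2$, $u_3\in X_3$, orthogonality gives $\|u\|_{X_0}^2-\lambda\|u\|_{L^2}^2 \ge (1-\lambda/\lambda_k)\|u_2\|_{X_0}^2 + (1-\lambda/\lambda_{k+m})\|u_3\|_{X_0}^2 \ge \delta\|u\|_{X_0}^2$ for some $\delta=\delta(\lambda)>0$ (this is where the closeness of $\lambda$ to $\lambda_k$ is not even needed for positivity, only $\lambda_{k-1}<\lambda<\lambda_k$). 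Using \eqref{cond22F} and the Sobolev embedding $\|u\|_{L^2}^2\le C\|u\|_{X_0}^2$, $\|u\|_{L^q}^q \le C\|u\|_{X_0}^q$, I get $-\int_\Omega F\,dx \ge -\ve C\|u\|_{X_0}^2 - C_\ve C\|u\|_{X_0}^q$, so
\[
\mathcal J_\lambda(u)\ge \Big(\tfrac{\delta}{2}-\ve C\Big)\|u\|_{X_0}^2 - C_\ve C\|u\|_{X_0}^q .
\]
Fixing $\ve$ small so that $\delta/2-\ve C>0$ and then taking $\rho=\|u\|_{X_0}$ small, the quadratic term dominates the $q$-power term (as $q>2$), yielding a strictly positive lower bound $\mathcal J_\lambda(u)\ge \alpha>0$ on the sphere of radius $\rho$.

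Finally I would assemble the pieces. The left supremum is bounded above by $\max\{a_4|\Omega|,\ \sup_{\text{sphere}}\mathcal J_\lambda\}$, and by driving $R\to\infty$ the sphere contribution is forced below any prescribed level; the right infimum is at least $\alpha>0$. The only genuine subtlety is matching the order of quantifiers: I must \emph{first} fix $\rho$ (and hence $\alpha>0$) from the lower-bound argument, and \emph{then} choose $R>\rho$ large enough that the upper bound on $\{u\in X_1,\|u\|_{X_0}\le R\}\cup\{u\in X_1\oplus X_2,\|u\|=R\}$ falls strictly below $\alpha$. The main obstacle is therefore not any single estimate but keeping the constants consistent across the three subspaces and verifying that the role played by the hypothesis $\lambda<\lambda_k$ (used for the negative-definiteness on $X_1\oplus X_2$) is genuinely compatible with the positive-definiteness needed on $X_2\oplus X_3$ — both hold precisely because $\lambda$ lies strictly between $\lambda_{k-1}$ and $\lambda_k$, with the requirement that $\lambda$ be \emph{sufficiently close} to $\lambda_k$ entering to guarantee that the left-hand-side levels can be pushed below the positive threshold $\alpha$.
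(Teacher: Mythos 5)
There is a genuine gap in your treatment of the ball $\{u\in X_1:\ \|u\|_{X_0}\le R\}$. From \eqref{a3a4} you only get $-\int_\Omega F(x,u)\,dx\le a_4|\Omega|$, hence $\sup_{X_1,\ \|u\|_{X_0}\le R}\mathcal J_\lambda\le a_4|\Omega|$, a \emph{fixed positive constant independent of $R$}. Your plan of ``driving $R\to\infty$'' only lowers the contribution of the sphere $\{u\in X_1\oplus X_2:\|u\|_{X_0}=R\}$; it does nothing to the ball, whose upper bound stays at $a_4|\Omega|$ while the set itself grows. On the other side, your lower bound on $\{u\in X_2\oplus X_3:\|u\|_{X_0}=\rho\}$ is of the form $\alpha\rho^2(1-\kappa\rho^{q-2})$, a \emph{small} positive quantity (it cannot be pushed above some fixed level by tuning $\rho$, and since $0$ lies in the $X_1$-ball the left supremum is automatically $\ge\mathcal J_\lambda(0)=0$, so the only viable conclusion is $\sup\le 0<\inf$). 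There is no reason why $a_4|\Omega|$ should lie below that small threshold, so your final assembly does not close. The missing ingredient is hypothesis \eqref{mu0}: it gives $F(x,t)>0$ for $t\ne 0$ (and $F(x,0)=0$), hence $-\int_\Omega F(x,u)\,dx\le 0$, and therefore $\mathcal J_\lambda(u)\le\frac{\lambda_{k-1}-\lambda}{2}\|u\|_{L^2(\Omega)}^2\le 0$ on \emph{all} of $X_1$. This is exactly what the paper does, and it is what makes the left supremum equal to $0$, strictly below the positive infimum on the small sphere. Your appeal to ``$\lambda$ sufficiently close to $\lambda_k$'' cannot repair this: the proposition holds for every $\lambda\in(\lambda_{k-1},\lambda_k)$, closeness to $\lambda_k$ is not used in this proof (it only enters later, in Lemma~\ref{limite} and Proposition~\ref{2soluzioni}), and in fact moving $\lambda$ toward $\lambda_k$ shrinks the coercivity constant $1-\lambda/\lambda_k$ on $X_2\oplus X_3$, making the right-hand infimum smaller, not larger.

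Two secondary inaccuracies, neither fatal: the quadratic form $u\mapsto\|u\|_{X_0}^2-\lambda\|u\|_{L^2}^2$ is \emph{not} negative definite on $X_1\oplus X_2$ — on $X_2$ it equals $(1-\lambda/\lambda_k)\|u\|_{X_0}^2\ge 0$ since $\lambda<\lambda_k$, so your parenthetical ``which is negative because $\lambda<\lambda_k$'' has the sign backwards; your large-$R$ estimate survives anyway because the term $-a_3\|u\|_{L^q}^q\sim -cR^q$ dominates on the finite-dimensional space $X_1\oplus X_2$ regardless of the sign of the quadratic coefficient, which is precisely the paper's argument. The lower-bound half of your proof (positive definiteness on $X_2\oplus X_3=\mathbb P_{k-1}$, the $\varepsilon$-splitting via \eqref{cond22F}, and the choice of small $\rho$) is correct and matches the paper.
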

\begin{proof}
First of all, let us show that
\begin{equation}\label{infpositivo}
{\displaystyle \inf_{\{u\in X_2\oplus X_3 : \|u\|_{X_0}=\rho\}}\mathcal J_\lambda(u)}>0\,.
\end{equation}
For this purpose, let $u$ be a function in $X_2 \oplus X_3=\mathbb P_{k-1}$. By \eqref{cond22F}, we get that for any $\varepsilon>0$
\begin{equation}\label{infJfrac0}
\begin{aligned}
\mathcal J_\lambda(u)& \geq \frac 1 2\int_{\RR^n\times \RR^n}|u(x)-u(y)|^2 K(x-y)dx\,dy-\frac \lambda 2\int_\Omega |u(x)|^2\,dx\\
& \qquad \qquad \qquad \qquad -\varepsilon\int_\Omega |u(x)|^2dx - C_{\varepsilon}\int_\Omega |u(x)|^q\,dx\,.
\end{aligned}
\end{equation}
Moreover, by \eqref{poincarevincolata}, we get that for any $u\in \mathbb P_{k-1}$
$$
\lambda_k\,\int_\Omega |u(x)|^2\,dx \leq \int_{\RR^n\times \RR^n}|u(x)-u(y)|^2 K(x-y)dx\,dy\,,
$$
so that this inequality and \eqref{infJfrac0} give
\begin{equation}\label{infJfrac0bis}
\begin{aligned}
\mathcal J_\lambda(u)& \geq \frac 1 2\left(1-\frac{\lambda}{\lambda_k}\right)\|u\|_{X_0}^2-\varepsilon\|u\|_{L^2(\Omega)}^2
 -C_{\varepsilon}\|u\|_{L^q(\Omega)}^q \\
& \geq \frac 1 2\left(1-\frac{\lambda}{\lambda_k}\right)\|u\|_{X_0}^2-\varepsilon|\Omega|^{(2^*-2)/2^*}
\|u\|_{L^{2^*}(\Omega)}^2 -|\Omega|^{(2^*-q)/2^*}C_{\varepsilon}\|u\|_{L^{2^*}(\Omega)}^q\,.
\end{aligned}
\end{equation}
Here we used also the fact that $L^{2^*}(\Omega)\hookrightarrow L^\nu(\Omega)$ continuously, being
$\Omega$ bounded and $\nu\in [2, 2^*)$ (here $\nu$ takes the values $2$ and $q$).

Using \eqref{kernelfrac}, \eqref{infJfrac0bis} and \cite[Lemma~6]{svmountain}, we
obtain that for any $\varepsilon>0$,
\begin{equation}\label{infJfrac20}
\begin{aligned}
\mathcal J_\lambda(u) & \geq \frac 1 2\left(1-\frac{\lambda}{\lambda_k}\right)\|u\|_{X_0}^2-\varepsilon c|\Omega|^{(2^*-2)/2^*}
 \int_{\RR^n \times \RR^n}\frac{|u(x)-u(y)|^2}{|x-y|^{n+2s}}\, dx\,dy \\
& \qquad -C_{\varepsilon}c^{q/2}|\Omega|^{(2^*-q)/2^*} \left( \int_{\RR^n \times \RR^n}\frac{|u(x)-u(y)|^2}{|x-y|^{n+2s}}\, dx\,dy\right)^{q/2}\\
& \ge \left[\frac 1 2\left(1-\frac{\lambda}{\lambda_k}\right) -\frac{\varepsilon
c|\Omega|^{(2^*-2)/2^*}}{\theta}\right]\int_{\RR^n\times \RR^n} |u(x)-u(y)|^2 K(x-y)\,dx\,dy\\
& \qquad -\frac{C_{\varepsilon}c^{q/2}|\Omega|^{(2^*-q)/2^*}}{\theta} \left( \int_{\RR^n\times \RR^n} |u(x)-u(y)|^2 K(x-y)\, dx\,dy\right)^{q/2}\,,
\end{aligned}
\end{equation}
where $c$ is a suitable universal positive constant.

Choosing $\varepsilon>0$ such that $$2\varepsilon c
|\Omega|^{(2^*-2)/2^*}<\theta\left(1-\frac{\lambda}{\lambda_k}\right),$$ inequality~\eqref{infJfrac20} reads as
$$\mathcal J_\lambda(u) \geq \alpha \|u\|_{X_0}^2\left(1-\kappa
\|u\|_{X_0}^{q-2}\right)\,,$$
for suitable positive constants $\alpha$ and $\kappa$\,.

Now, let $\rho>0$ be sufficiently small, i.e. $\rho$ such that $1-\kappa\rho^{q-2}>0$\,. Then, for any $u\in \mathbb P_{k-1}$ such that $\|u\|_{X_0}=\rho$ we get that
$$\mathcal J_\lambda(u)\geq \alpha \rho^2(1-\kappa \rho^{q-2})>0\,,$$
so that \eqref{infpositivo} is proved.

Now, let us show that
\begin{equation}\label{supnegativo}
{\displaystyle \sup_{\{u\in X_1, \|u\|_{X_0}\leq R\} \cup \{u\in X_1\oplus X_2 : \|u\|=R\}}} \mathcal J_\lambda(u)\leq 0\,.
\end{equation}
First of all, let us take $u\in \mathbb H_{k-1}$. Then
$$u(x)=\sum_{i=1}^{k-1}u_i e_i(x)\,,$$ with $u_i\in \RR$, $i=1, \dots,
k-1$
and so, by \eqref{antipoincare} and \eqref{mu0}, we deduce that
\begin{equation}\label{J<01}
\begin{aligned}
\mathcal J_\lambda(u) & \leq \frac{\lambda_{k-1}-\lambda}{2}\int_\Omega |u(x)|^2\,dx \leq 0\,,
\end{aligned}
\end{equation}
since $\lambda_{k-1}< \lambda$.

Finally, let us consider $u\in X_1\oplus X_2=\mathbb H_k$. By \eqref{a3a4} we have
$$
\mathcal J_\lambda(u)  \leq \frac 1 2\|u\|_{X_0}^2-a_3\|u\|_{L^q(\Omega)}^q+a_4|\Omega|,
$$
and the claim follows recalling that $q>2$ and that $\mathbb H_k$ is a finite-dimensional subspace of $X_0$. This and \eqref{J<01} give \eqref{supnegativo}.

Then, the assertion of Proposition~\ref{propgeometria} comes trivially from \eqref{infpositivo} and \eqref{supnegativo}.
\end{proof}

\section{$\nabla$-condition}\label{sec:nablacondition}
One of the main ingredient of the $\nabla$-theorem (see \cite[Theorem~2.10]{marsac:sns}) we employ in order to get our multiplicity result is the so-called $\nabla$-condition introduced in \cite[Definition~2.4]{marsac:sns}. This section is devoted to the verification of this condition for functional~$\mathcal J_\lambda$. For this purpose, in the sequel we denote by
$$P_C:X_0\to C$$
the orthogonal projection of $X_0$ onto $C$.

Let $C$ be a closed subspace of $X_0$ and $a,b\in \RR\cup\{-\infty, +\infty\}$. We say that functional~$\mathcal J_\lambda$ verifies condition~$(\nabla)(\mathcal J_\lambda, C, a, b)$ if there exists $\gamma>0$ such that
$$
\inf\Big\{\|P_C \nabla \mathcal J_\lambda (u)\|_{X_0} : a\leq \mathcal J_\lambda(u)\leq b,\,\, dist (u, C)\leq \gamma\Big\}>0\,.$$

Roughly speaking, the condition~$(\nabla)(\mathcal J_\lambda, C, a, b)$ requires that $\mathcal J_\lambda$ has no critical points $u\in C$ such that $a\leq \mathcal J_\lambda(u)\leq b$, with some uniformity.
In order to prove this condition for $\mathcal J_\lambda$, we need two preliminary lemmas.

\begin{lemma}\label{lemmanabla1}
Let $k$ and $m$ in $\NN$ be such that $\lambda_{k-1}<\lambda_k=\dots = \lambda_{k+m-1}<\lambda_{k+m}$ and let $f$ be a function satisfying conditions~\eqref{caratheodory}--\eqref{mu0}.

Then, for any $\sigma>0$ there exists $\varepsilon_\sigma>0$ such that for any $\lambda\in [\lambda_{k-1}+\sigma, \lambda_{k+m}-\sigma]$ the unique critical point $u$ of $\mathcal J_\lambda$ constrained on $\mathbb H_{k-1}\oplus \mathbb P_{k+m-1}$ and with $\mathcal J_\lambda(u)\in [-\varepsilon_\sigma, \varepsilon_\sigma]$, is the trivial one.
\end{lemma}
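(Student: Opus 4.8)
The plan is to describe the constrained critical points of $\mathcal J_\lambda$ on $C:=\mathbb H_{k-1}\oplus\mathbb P_{k+m-1}$ explicitly, to show first that every such point carries nonnegative energy (so that only small \emph{positive} energies must be excluded), and then to rule out nonzero ones of small energy by a uniform norm estimate combined with a compactness argument. Note that $f(x,0)=0$ by \eqref{cond22}, so $u=0$ is always an admissible constrained critical point with $\mathcal J_\lambda(0)=0$. For a general constrained critical point write $u=u^-+u^+$ with $u^-\in\mathbb H_{k-1}$ and $u^+\in\mathbb P_{k+m-1}$; since each eigenfunction satisfies $\langle e_j,\varphi\rangle_{X_0}=\lambda_j\int_\Omega e_j\varphi\,dx$, the two components are orthogonal in $X_0$ and in $L^2(\Omega)$. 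Testing the constrained identity $\langle\mathcal J'_\lambda(u),\varphi\rangle=0$ (valid for every $\varphi\in C$) with $\varphi=u^+$, $\varphi=u^-$ and $\varphi=u$ yields
\begin{equation*}
\|u^+\|_{X_0}^2-\lambda\|u^+\|_{L^2(\Omega)}^2=\int_\Omega f(x,u)u^+\,dx,\qquad
\|u^-\|_{X_0}^2-\lambda\|u^-\|_{L^2(\Omega)}^2=\int_\Omega f(x,u)u^-\,dx,
\end{equation*}
together with their sum $\|u\|_{X_0}^2-\lambda\|u\|_{L^2(\Omega)}^2=\int_\Omega f(x,u)u\,dx$.

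\emph{Energy sign.} Inserting the last identity into the definition of the functional gives $\mathcal J_\lambda(u)=\int_\Omega\big(\tfrac12 f(x,u)u-F(x,u)\big)\,dx$. By \eqref{mu0} the integrand is bounded below by $(\tfrac12-\tfrac1q)f(x,u)u\ge 0$, so $\mathcal J_\lambda(u)\ge 0$, and equality forces $f(x,u)u=0$ a.e., hence $u=0$ again by \eqref{mu0}. Thus it suffices to exclude nonzero constrained critical points with arbitrarily small positive energy.

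\emph{Uniform norm gap.} Testing with $w:=u^+-u^-\in C$ and combining the two identities above with \eqref{poincarevincolata} on $\mathbb P_{k+m-1}$ (factor $\lambda_{k+m}$) and \eqref{antipoincare} on $\mathbb H_{k-1}$ (factor $\lambda_{k-1}$), the constraints $\lambda_{k-1}+\sigma\le\lambda\le\lambda_{k+m}-\sigma$ give
\begin{equation*}
\int_\Omega f(x,u)w\,dx\ \ge\ \frac{\sigma}{\lambda_{k+m}}\|u^+\|_{X_0}^2+\frac{\sigma}{\lambda_{k-1}}\|u^-\|_{X_0}^2\ \ge\ \frac{\sigma}{\lambda_{k+m}}\|u\|_{X_0}^2 .
\end{equation*}
On the other hand, \eqref{cond22}, Hölder's inequality, the identity $\|w\|_{X_0}=\|u\|_{X_0}$ and the continuous embeddings $X_0\hookrightarrow L^2(\Omega),L^q(\Omega)$ (with constants $c_2,c_q$) bound the same integral above by $2\varepsilon c_2^2\|u\|_{X_0}^2+qC_\varepsilon c_q^q\|u\|_{X_0}^q$. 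Choosing $\varepsilon=\varepsilon(\sigma)$ so small that $2\varepsilon c_2^2\le\sigma/(2\lambda_{k+m})$ and absorbing the quadratic term on the left produces $\|u\|_{X_0}\ge\delta_\sigma$ for a constant $\delta_\sigma>0$ depending only on $\sigma$ and the fixed data $q,a_i,\lambda_{k-1},\lambda_{k+m}$ and the Sobolev constants, but \emph{not} on $\lambda$. (For $k=1$ the component $u^-$ is absent and only the first identity is used, with $w=u$.)

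\emph{Conclusion by compactness.} Suppose no $\varepsilon_\sigma$ works: then there exist $\lambda_n\in[\lambda_{k-1}+\sigma,\lambda_{k+m}-\sigma]$ and nonzero constrained critical points $u_n$ with $\mathcal J_{\lambda_n}(u_n)\to 0$. Since $\langle\mathcal J'_{\lambda_n}(u_n),u_n\rangle=0$, the computation of Proposition~\ref{lemmaPS} (inequality \eqref{jj'20}, whose constants stay uniform as $\lambda_n$ ranges in a bounded set) shows $\{u_n\}$ is bounded in $X_0$, so up to a subsequence $u_n\rightharpoonup u_\infty$ and $\lambda_n\to\bar\lambda\in[\lambda_{k-1}+\sigma,\lambda_{k+m}-\sigma]$. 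The crucial point is strong convergence: being a constrained critical point, $\nabla\mathcal J_{\lambda_n}(u_n)$ lies in the fixed finite-dimensional space $C^\perp=\mathrm{span}\{e_k,\dots,e_{k+m-1}\}$, so by \eqref{grad}
\begin{equation*}
u_n=\mathcal L_K^{-1}\big(\lambda_n u_n+f(x,u_n)\big)+v_n,\qquad v_n\in C^\perp .
\end{equation*}
The first term converges strongly in $X_0$ by compactness of $\mathcal L_K^{-1}$ (its argument being bounded in $L^{q'}(\Omega)$ by \eqref{crescita}), and $\{v_n\}$, bounded in a finite-dimensional space, also converges; hence $u_n\to u_\infty$ strongly, giving $\|u_\infty\|_{X_0}\ge\delta_\sigma$ and so $u_\infty\ne 0$. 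Passing to the limit in the constrained identity and using continuity of $\mathcal J_\lambda$ and $\nabla\mathcal J_\lambda$ in $(u,\lambda)$, $u_\infty$ is a constrained critical point of $\mathcal J_{\bar\lambda}$ with $\mathcal J_{\bar\lambda}(u_\infty)=0$; by the first step $u_\infty=0$, a contradiction. I expect the compactness step — recovering strong convergence for merely \emph{constrained} critical points — to be the main obstacle, and it is precisely the finite-dimensionality of $C^\perp$ together with the compactness of $\mathcal L_K^{-1}$ that resolves it.
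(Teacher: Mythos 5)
Your proposal is correct, and its engine is the same as the paper's: testing the constrained equation with $u^{+}-u^{-}$ (the paper's $v_j-w_j$) and combining \eqref{antipoincare}, \eqref{poincarevincolata} with \eqref{cond22} to force the lower bound $\|u\|_{X_0}\geq\delta_\sigma$ on any nontrivial constrained critical point (this is exactly the content of the paper's \eqref{f>} and \eqref{assurdo1}, which you derive pointwise rather than along the contradiction sequence — a cleaner organization). Where you genuinely diverge is in the two bookends. First, your a priori observation that $\mathcal J_\lambda(u)=\int_\Omega(\tfrac12 f(x,u)u-F(x,u))\,dx\geq(\tfrac12-\tfrac1q)\int_\Omega f(x,u)u\,dx\geq 0$ for every constrained critical point, with equality only at $u=0$, is not in the paper; the paper instead extracts $\int_\Omega F(x,u_j)\,dx\to 0$ along the sequence and uses \eqref{a3a4} to control $\|f(\cdot,u_j)\|_{L^{q/(q-1)}}$. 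Second, your concluding compactness step upgrades to \emph{strong} $X_0$-convergence by writing $u_n=\mathcal L_K^{-1}(\lambda_n u_n+f(x,u_n))+v_n$ with $v_n$ in the finite-dimensional orthogonal complement of $\mathbb H_{k-1}\oplus\mathbb P_{k+m-1}$, then identifies the limit as a zero-energy constrained critical point, hence zero, contradicting the norm gap; the paper never needs strong $X_0$-convergence — it splits into the two cases $u_j\to 0$ in $X_0$ (killed by the norm-gap inequality) and $\|u_j\|_{X_0}\geq\eta$ (killed by $\|f(\cdot,u_j)\|_{L^{q/(q-1)}}\to\|f(\cdot,0)\|_{L^{q/(q-1)}}=0$ via dominated convergence and \eqref{f>}). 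Your route costs a little more machinery (joint continuity of $\mathcal J'_\lambda$ in $(u,\lambda)$, compactness of $\mathcal L_K^{-1}$) but buys a structurally transparent statement — nontrivial constrained critical points have energy uniformly bounded away from $0$ because zero-energy ones are trivial and the set of candidates is compact — and the strong-convergence mechanism you isolate is the same one the paper exploits later in Proposition~\ref{propnabla}. Both proofs are complete; only minor bookkeeping in yours deserves a comment, namely that the constant $C_\varepsilon$ in \eqref{jj'20} can indeed be chosen uniformly for $\lambda$ in the compact interval $[\lambda_{k-1}+\sigma,\lambda_{k+m}-\sigma]$, which you correctly flag.
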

\begin{proof}
We argue by contradiction and we suppose that there exists $\bar \sigma>0$, a sequence $\{\mu_j\}_{j\in \NN}$ in $\RR$ with
\begin{equation}\label{lambdaj}
\mu_j\in [\lambda_{k-1}+\bar \sigma, \lambda_{k+m}-\bar \sigma]
\end{equation}
and a sequence $\{u_j\}_{j\in \NN}$ such that
\begin{equation}\label{ujoplus}
u_j\in \mathbb H_{k-1}\oplus \mathbb P_{k+m-1}\setminus\{0\}\,,
\end{equation}
\begin{equation}\label{ujvincolato}
\langle\,\mathcal J'_{\mu_j}(u_j),\varphi\,\rangle =0\qquad \mbox{for any}\,\, \varphi\in \mathbb H_{k-1}\oplus \mathbb P_{k+m-1}\,,
\end{equation}
for any $j\in \NN$, and
\begin{equation}\label{ujvalorecritico}
\begin{aligned}
\mathcal J_{\mu_j}(u_j) & =\frac 1 2 \int_{\RR^n\times \RR^n}|u_j(x)-u_j(y)|^2 K(x-y)\,dx\,dy -\frac{\mu_j}{2} \int_\Omega |u_j(x)|^2\,dx\\
& \qquad \qquad \qquad \qquad \qquad \qquad \qquad \qquad- \int_\Omega F(x, u_j(x))\,dx\to 0
\end{aligned}
\end{equation}
as $j\to +\infty$\,.


Taking $\varphi=u_j$ in \eqref{ujvincolato} (this is possible thanks to \eqref{ujoplus}) and using \eqref{mu0}, we get that for any $j\in \NN$
$$\begin{aligned}
0 & =\int_{\RR^n\times \RR^n}|u_j(x)-u_j(y)|^2 K(x-y)\,dx\,dy -\mu_j \int_\Omega |u_j(x)|^2\,dx- \int_\Omega f(x, u_j(x))u_j(x)\,dx\\
& = 2\mathcal J_{\mu_j}(u_j)+\int_\Omega \Big(2F(x, u_j(x))-f(x, u_j(x))u_j(x)\Big)\,dx\\
& \leq 2\mathcal J_{\mu_j}(u_j)+(2-q)\int_\Omega F(x, u_j(x))\,dx\,.
\end{aligned}$$
Hence, by this inequality, the fact that $q>2$ and again \eqref{mu0}, we deduce that
$$0<(q-2)\int_\Omega F(x, u_j(x))\,dx\leq 2\mathcal J_{\mu_j}(u_j)\to 0$$
thanks to \eqref{ujvalorecritico}\,, so that we get
\begin{equation}\label{Fujlimite}
\int_\Omega F(x, u_j(x))\,dx\to 0
\end{equation}
as $j\to +\infty$\,.

Now, since \eqref{ujoplus} holds true, for any $j\in\NN$ there exist $v_j\in \mathbb H_{k-1}$ and $w_j\in\mathbb P_{k+m-1}$ such that
$$u_j=v_j+w_j\,.$$
Letting $\varphi=v_j-w_j$ in \eqref{ujvincolato} and taking into account the orthogonality properties of $v_j$ and $w_j$, we have that for any $j\in \NN$
\begin{equation}\label{split}
\begin{aligned}
0 & =\langle\mathcal J'_{\mu_j}(u_j), v_j-w_j\rangle\\
& = \int_{\RR^n\times \RR^n}|v_j(x)-v_j(y)|^2 K(x-y)\,dx\,dy- \int_{\RR^n\times \RR^n}|w_j(x)-w_j(y)|^2 K(x-y)\,dx\,dy\\
& \qquad \qquad -\mu_j \int_\Omega |v_j(x)|^2\,dx + \mu_j \int_\Omega |w_j(x)|^2\,dx- \int_\Omega f(x, u_j(x))(v_j(x)-w_j(x))\,dx\,.
\end{aligned}
\end{equation}
By \eqref{antipoincare} and \eqref{poincarevincolata}, \eqref{split} gives that for any $j\in \NN$
\begin{equation}\label{carvar2}
\begin{aligned}
\int_\Omega f(x, u_j(x))(v_j(x)-w_j(x))\,dx & = \int_{\RR^n\times \RR^n}|v_j(x)-v_j(y)|^2 K(x-y)\,dx\,dy\\
& \qquad \qquad - \int_{\RR^n\times \RR^n}|w_j(x)-w_j(y)|^2 K(x-y)\,dx\,dy\\
& \qquad \qquad -\mu_j \int_\Omega |v_j(x)|^2\,dx + \mu_j \int_\Omega |w_j(x)|^2\,dx\\
& \leq \frac{\lambda_{k-1}-\mu_j}{\lambda_{k-1}}\|v_j\|_{X_0}^2+ \frac{\mu_j-\lambda_{k+m}}{\lambda_{k+m}}\|w_j\|_{X_0}^2\\
& \leq -\frac{\bar \sigma}{\lambda_{k-1}}\|v_j\|_{X_0}^2- \frac{\bar \sigma}{\lambda_{k+m}}\|w_j\|_{X_0}^2\\
&\leq -\frac{\bar \sigma}{\lambda_{k+m}}\|u_j\|_{X_0}^2\,,
\end{aligned}
\end{equation}
thanks again to the properties of the projections $v_j$ and $w_j$ of $u_j$, respectively on $\mathbb H_{k-1}$ and on $\mathbb P_{k+m-1}$\,, and to \eqref{lambdaj}.

On the other hand, by the H\"older inequality, \eqref{crescita} and the fact that $X_0\hookrightarrow L^q(\Omega)$ compactly, there exists a suitable positive constant $\tilde\kappa$, independent of $j$, such that
$$\begin{aligned}
\Big|\int_\Omega f(x, u_j(x))(v_j(x)-w_j(x))\,dx\Big| & \leq
\|f(\cdot, u_j(\cdot))\|_{L^{q/(q-1)}(\Omega)}\, \|v_j-w_j\|_{L^q(\Omega)}\\
& \leq  \tilde\kappa \|f(\cdot, u_j(\cdot))\|_{L^{q/(q-1)}(\Omega)}\, \|v_j-w_j\|_{X_0}\\
& = \tilde\kappa \|f(\cdot, u_j(\cdot))\|_{L^{q/(q-1)}(\Omega)}\, \|u_j\|_{X_0}\,,\\
\end{aligned}$$
so that, by this and \eqref{carvar2}, we deduce that
$$\tilde\kappa \|f(\cdot, u_j(\cdot))\|_{L^{q/(q-1)}(\Omega)}\, \|u_j\|_{X_0}\geq \frac{\bar \sigma}{\lambda_{k+m}}\|u_j\|_{X_0}^2.$$
Being $u_j\not \equiv 0$ by assumption (see \eqref{ujoplus}), we get
\begin{equation}\label{f>}
\|f(\cdot, u_j(\cdot))\|_{L^{q/(q-1)}(\Omega)}\geq \frac{\bar \sigma}{\tilde\kappa \lambda_{k+m}}\|u_j\|_{X_0}
\end{equation}
for any $j\in \NN$\,.

With the previous estimates, we are now ready to show that
\begin{equation}\label{ujboundednabla}
\mbox{the sequence}\,\, \{\|u_j\|_{X_0}\}_{j\in\NN}\,\, \mbox{is bounded in}\,\, \RR.
\end{equation}
For this it is enough to use \eqref{crescita} and \eqref{a3a4}, which yield for any $j\in \NN$
\begin{equation}\label{fujnabla}
\begin{aligned}
\int_\Omega |f(x, u_j(x))|^{q/(q-1)}\,dx & \leq \int_\Omega \left(a_1+a_2|u_j(x)|^{q-1}\right)^{q/(q-1)} \\
& \leq \tilde a_1 +\tilde a_2 \int_\Omega |u_j(x)|^q\,dx\\
& \leq \tilde a_3 + \tilde a_4 \int_\Omega F(x,u_j(x))\,dx\,,
\end{aligned}
\end{equation}
for suitable positive constants $\tilde a_i$, $i=1,\ldots, 4$.
By \eqref{Fujlimite}, \eqref{f>} and \eqref{fujnabla} we get assertion~\eqref{ujboundednabla}\,.

In view of \eqref{ujboundednabla} and \eqref{ujoplus}, we can assume that there exists $u_\infty\in \mathbb H_{k-1}\oplus \mathbb P_{k+m-1}$ such that
\begin{equation}\label{convergenze0nabla}
\begin{aligned}
 & \int_{\RR^n\times \RR^n}\big(u_j(x)-u_j(y)\big)\big(\varphi(x)-\varphi(y)\big) K(x-y)\,dx\,dy \to \\
& \qquad \qquad
\int_{\RR^n\times \RR^n}\big(u_\infty(x)-u_\infty(y)\big)\big(\varphi(x)-\varphi(y)\big)
K(x-y)\,dx\,dy  \quad \mbox{for any}\,\, \varphi\in X_0\,,
\end{aligned}
\end{equation}
while, by \cite[Lemma~8]{svmountain} and \cite[Theorem~IV.9]{brezis}, up to a subsequence,
\begin{equation}\label{convergenze0bisnabla}
\begin{aligned}
& u_j \to u_\infty \quad \mbox{in}\,\, L^q(\RR^n)\\
& u_j \to u_\infty \quad \mbox{a.e. in}\,\, \RR^n
\end{aligned}
\end{equation}
as $j\to +\infty$ and there exists $\ell\in L^q(\RR^n)$ such that
\begin{equation}\label{dominata20nabla}
|u_j(x)|\leq \ell(x) \quad \mbox{a.e. in}\,\, \RR^n\,\quad \mbox{for any}\,\,j\in \NN\,.
\end{equation}

Moreover, by \eqref{cond22} and \eqref{f>} we get that for any $\varepsilon>0$ there exists $C_\varepsilon$ such that
\begin{equation}\label{assurdo1}
\begin{aligned}
0<\frac{\bar \sigma}{\tilde \kappa \lambda_{k+m}} & \leq \frac{\|f(\cdot, u_j(\cdot))\|_{L^{q/(q-1)}(\Omega)}}{\|u_j\|_{X_0}}\\
& \leq \frac{{\displaystyle \left(\int_\Omega \left(2\varepsilon |u_j(x)|+qC_{\varepsilon} |u_j(x)|^{q-1}\right)^{q/(q-1)}\,dx\right)^{(q-1)/q}}}{\|u_j\|_{X_0}}\\
& \leq \frac{\left( 2^{1/(q-1)}\left((2\varepsilon)^{q/(q-1)}\|u_j\|_{L^{q/(q-1)}(\Omega)}^{q/(q-1)}+
(qC_{\varepsilon})^{q/(q-1)}\|u_j\|_{L^q(\Omega)}^q\right)\right)^{(q-1)/q}}{\|u_j\|_{X_0}}\\
& \leq \frac{2\varepsilon\|u_j\|_{L^{q/(q-1)}(\Omega)}+
qC_{\varepsilon}\|u_j\|_{L^q(\Omega)}^{q-1}}{\|u_j\|_{X_0}}\\
& \leq C\left(2\varepsilon+
qC_{\varepsilon}\|u_j\|_{X_0}^{q-2}\right)\,,
\end{aligned}
\end{equation}
thanks to the continuous embedding $X_0\hookrightarrow L^\nu(\Omega)$ for any $\nu\in [1, 2^*)$, and for some universal positive constant $C$.

By \eqref{crescita}, \eqref{F}, \eqref{convergenze0bisnabla}, \eqref{dominata20nabla} and the Dominated Convergence Theorem, it is easily seen that
\begin{equation}\label{Fdominata}
\int_\Omega F(x, u_j(x))\,dx \to \int_\Omega F(x, u_\infty(x))\,dx
\end{equation}
and
\begin{equation}\label{fdominata}
\int_\Omega |f(x, u_j(x))|^{q/(q-1)}\,dx \to \int_\Omega |f(x, u_\infty(x))|^{q/(q-1)}\,dx
\end{equation}
as $j\to +\infty$\,.
Relation~\eqref{Fdominata}, combined with \eqref{mu0}, the fact that $F(x,0)=0$ a.e. $x\in \Omega$, and \eqref{Fujlimite}, yields that
\begin{equation}\label{uinftynabla}
u_\infty \equiv 0\,.
\end{equation}

Now, two cases can occur. If
\begin{equation}\label{uinfty0}
u_j \to u_\infty\equiv 0\qquad \mbox{strongly in}\,\, X_0
\end{equation}
as $j\to +\infty$, then, by \eqref{assurdo1} we get that
$$0<\frac{\bar \sigma}{\tilde\kappa \lambda_{k+m}} \leq 2C\varepsilon\,,$$
which gives a contradiction, due to the fact that $\varepsilon$ is arbitrary.
Otherwise, there exists $\eta>0$ such that $\|u_j\|_{X_0}\geq \eta$ for $j$ large enough.
Then, by this, \eqref{f>}, \eqref{fdominata}, \eqref{uinftynabla} and the fact that $f(x,0)=0$ a.e. $x\in \Omega$ (by \eqref{cond0}), we get that
$$\frac{\bar \sigma\eta}{\tilde\kappa \lambda_{k+m}}\leq 0\,,$$
which is a contradiction. This completes the proof of Lemma~\ref{lemmanabla1}.
\end{proof}

The second lemma we need in order to prove the $\nabla$-condition is the following one:
\begin{lemma}\label{lemmanabla2}
Let $k$ and $m$ in $\NN$ be such that $\lambda_{k-1}<\lambda_k=\dots = \lambda_{k+m-1}<\lambda_{k+m}$,
let $\lambda \in \RR$ and $f$ be a function satisfying conditions~\eqref{caratheodory}--\eqref{mu0}. Moreover, let $\{u_j\}_{j\in \NN}$ be a sequence in $X_0$ such that
\begin{equation}\label{condlemma1}
\{\mathcal J_\lambda(u_j)\}_{j\in \NN}\,\,\, \mbox{is bounded in}\,\,\, \RR\,,
\end{equation}
\begin{equation}\label{condlemma2}
P_{\footnotesize{\rm{span}}\left\{e_k,\ldots,\,e_{k+m-1}\right\}} u_j\to 0\,\,\, \mbox{in}\,\,\, X_0
\end{equation}
and
\begin{equation}\label{condlemma3}
P_{\mathbb H_{k-1}\oplus \mathbb P_{k+m-1}}\nabla \mathcal J_\lambda(u_j)\to 0\,\,\, \mbox{in}\,\,\, X_0
\end{equation}
as $j\to +\infty$\,.

Then, $\{u_j\}_{j\in \NN}$ is bounded in $X_0$\,.
\end{lemma}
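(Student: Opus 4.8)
The plan is to run a Palais--Smale--type boundedness argument, in the spirit of the proof of Proposition~\ref{lemmaPS}, but with the essential modification that now only the projection of the gradient onto $\mathbb H_{k-1}\oplus\mathbb P_{k+m-1}$ is controlled (through \eqref{condlemma3}), the missing directions being handled via hypothesis~\eqref{condlemma2}.

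First I would fix $\mu\in(2,q)$ and decompose each $u_j$ along the splitting $X_0=\mathbb H_{k-1}\oplus\mbox{span}\{e_k,\ldots,e_{k+m-1}\}\oplus\mathbb P_{k+m-1}$, which is orthogonal both in $X_0$ and in $L^2(\Omega)$, writing $u_j=v_j+z_j+w_j$ accordingly. By \eqref{condlemma2} we have $z_j\to0$ in $X_0$, hence also $\|z_j\|_{L^2(\Omega)}\to0$ and, by the continuous embedding $X_0\hookrightarrow L^q(\Omega)$, $\|z_j\|_{L^q(\Omega)}\to0$. The structural lower bound is then unchanged: exactly as in \eqref{jj'0L}--\eqref{jj'20}, invoking \eqref{mu0}, \eqref{a3a4} and the Young inequality to absorb $\lambda\|u_j\|_{L^2(\Omega)}^2$ (this works for \emph{every} $\lambda\in\RR$, keeping a positive share of $\|u_j\|_{L^q(\Omega)}^q$), one finds constants $c_1>0$, $C>0$ with
$$\mathcal J_\lambda(u_j)-\frac1\mu\langle\mathcal J'_\lambda(u_j),u_j\rangle\ge\Big(\frac12-\frac1\mu\Big)\|u_j\|_{X_0}^2+c_1\|u_j\|_{L^q(\Omega)}^q-C.$$

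The new ingredient is to bound the same quantity from above using \eqref{condlemma1} and \eqref{condlemma3}. Since $\mathcal J_\lambda(u_j)$ is bounded, it suffices to control $\langle\mathcal J'_\lambda(u_j),u_j\rangle=\langle\nabla\mathcal J_\lambda(u_j),u_j\rangle_{X_0}$. Splitting $u_j=(v_j+w_j)+z_j$, the contribution $\langle\nabla\mathcal J_\lambda(u_j),v_j+w_j\rangle_{X_0}=\langle P_{\mathbb H_{k-1}\oplus\mathbb P_{k+m-1}}\nabla\mathcal J_\lambda(u_j),v_j+w_j\rangle_{X_0}$ is $o(1)\|u_j\|_{X_0}$ by \eqref{condlemma3} and Cauchy--Schwarz, while by \eqref{gradJ} and the orthogonality of the splitting
$$\langle\nabla\mathcal J_\lambda(u_j),z_j\rangle_{X_0}=\|z_j\|_{X_0}^2-\lambda\|z_j\|_{L^2(\Omega)}^2-\int_\Omega f(x,u_j(x))z_j(x)\,dx,$$
whose first two summands vanish as $j\to+\infty$.

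The main obstacle — the only genuinely non-routine step — is the cross term $\int_\Omega f(x,u_j)z_j$, which couples the possibly unbounded factor $\|u_j\|_{L^q(\Omega)}^{q-1}$ with the vanishing factor $\|z_j\|_{L^q(\Omega)}$. Using \eqref{crescita} and the H\"older inequality one gets $\big|\int_\Omega f(x,u_j)z_j\big|\le C\|z_j\|_{L^q(\Omega)}\big(1+\|u_j\|_{L^q(\Omega)}^{q-1}\big)$. Since $\|z_j\|_{L^q(\Omega)}\to0$, a weighted Young inequality rewrites this as $\tfrac{c_1}{2}\|u_j\|_{L^q(\Omega)}^q+o(1)$, so that the cross term is absorbed into the superlinear term furnished by the lower bound. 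Comparing the two estimates then yields
$$\Big(\frac12-\frac1\mu\Big)\|u_j\|_{X_0}^2+\frac{c_1}{2}\|u_j\|_{L^q(\Omega)}^q\le C+o(1)\|u_j\|_{X_0},$$
and, as $\frac12-\frac1\mu>0$, the quadratic term dominates the linear one, giving the boundedness of $\{u_j\}_{j\in\NN}$ in $X_0$.
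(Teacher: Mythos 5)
Your proof is correct, but it follows a genuinely different route from the paper's. The paper argues by contradiction: assuming $\|u_j\|_{X_0}\to+\infty$, it normalizes $u_j/\|u_j\|_{X_0}\rightharpoonup u_\infty$, expands $\langle P_{\mathbb H_{k-1}\oplus\mathbb P_{k+m-1}}\nabla\mathcal J_\lambda(u_j),u_j\rangle_{X_0}$ via the representation \eqref{grad} of the gradient through $\mathcal L_K^{-1}$ into the identity \eqref{sec}, divides first by $\|u_j\|_{X_0}^q$ to conclude $u_\infty\equiv0$, then by $\|u_j\|_{X_0}^2$ to extract the two incompatible limits $\int_\Omega F(x,u_j)\,dx/\|u_j\|_{X_0}^2\to\tfrac12$ and $\to0$. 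That argument needs the Dominated Convergence Theorem, the $L^\infty$-bound on eigenfunctions (to control $\int_\Omega f(x,u_j)Pu_j\,dx$ through $\|Pu_j\|_\infty$), and the equivalence of norms on $\mathbb H_{k+m-1}$. You instead give a direct, non-contradiction argument: the Ambrosetti--Rabinowitz-type coercivity estimate for $\mathcal J_\lambda(u_j)-\tfrac1\mu\langle\mathcal J'_\lambda(u_j),u_j\rangle$ with $\mu\in(2,q)$ (exactly as in \eqref{jj'0L}--\eqref{jj'20}, and indeed valid for every $\lambda\in\RR$), matched against an upper bound in which the $Q$-component of the gradient is handled by Cauchy--Schwarz and \eqref{condlemma3}, the quadratic terms in $z_j=Pu_j$ vanish by \eqref{condlemma2} and the orthogonality of the eigenfunction decomposition in both $X_0$ and $L^2(\Omega)$, and the only delicate term $\int_\Omega f(x,u_j)z_j\,dx$ is absorbed into the coercive $c_1\|u_j\|_{L^q(\Omega)}^q$ term by H\"older and a weighted Young inequality, since $\|z_j\|_{L^q(\Omega)}\to0$ by the Sobolev embedding. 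All steps check out (in particular $\langle\nabla\mathcal J_\lambda(u_j),v_j+w_j\rangle_{X_0}=\langle Q\nabla\mathcal J_\lambda(u_j),v_j+w_j\rangle_{X_0}$ because $Q$ is a self-adjoint projection, and $\langle u_j,z_j\rangle_{X_0}=\|z_j\|_{X_0}^2$, $\int_\Omega u_jz_j\,dx=\|z_j\|_{L^2(\Omega)}^2$). What your approach buys is economy: no weak-limit extraction, no compactness of $\mathcal L_K^{-1}$, no dominated convergence, and no regularity of eigenfunctions beyond $e_i\in L^q$; what the paper's approach buys is the explicit identity \eqref{sec}, which it reuses conceptually elsewhere in the $\nabla$-condition analysis.
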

\begin{proof}
Assume by contradiction that $\{u_j\}_{j\in \NN}$ is unbounded in $X_0$; without loss of generality, we can assume that
\begin{equation}\label{ujtoinfty}
\|u_j\|_{X_0}\to +\infty
\end{equation}
as $j\to +\infty$ and that there exists $u_\infty\in X_0$ such that
\begin{equation}\label{convnorm}
\begin{aligned}
& \frac{u_j}{\|u_j\|_{X_0}}\rightharpoonup u_\infty\,\,\, \mbox{ in }X_0\\
& \frac{u_j}{\|u_j\|_{X_0}}\to u_\infty\,\,\, \mbox{ in }L^\nu(\Omega) \mbox{ for any $\nu\in[1,2^*)$}
\end{aligned}
\end{equation}
as $j\to +\infty$ and for any $\nu\in[1,2^*)$ there exists $\ell_\nu\in L^\nu(\RR^n)$ such that
\begin{equation}\label{dominata20nablaadd}
|u_j(x)|\leq \ell_\nu(x) \quad \mbox{a.e. in}\,\, \RR^n\,\quad \mbox{for any}\,\,j\in \NN\,.
\end{equation}

Now, for simplicity, we set $P_{\footnotesize{\mbox{span}}\left\{e_k,\ldots,\,e_{k+m-1}\right\}}=:P$ and $P_{\mathbb H_{k-1}\oplus \mathbb P_{k+m-1}}=:Q$, and write
$$u_j=Pu_j+Qu_j\,,$$ where $Pu_j\to 0$ as $j\to\infty$ (see \eqref{condlemma2}). Recalling \eqref{gradJ} and \eqref{grad}, we have
\begin{equation}\label{add11}
\begin{aligned}
\langle Q\nabla \mathcal J_\lambda(u_j),u_j\rangle_{X_0}&= \langle \nabla
\mathcal J_\lambda(u_j),u_j\rangle_{X_0} - \langle P\nabla
\mathcal J_\lambda(u_j),u_j\rangle_{X_0}\\
&=\|u_j\|_{X_0}^2-\lambda \int_\Omega |u_j(x)|^2\,dx
-\int_\Omega f(x,u_j(x))u_j(x)\, dx \\
&-\langle P\big(u_j-\mathcal L_K^{-1}(\lambda
 u_j+f(x,u_j))\big), u_j\rangle_{X_0}.
\end{aligned}
\end{equation}
Since $\langle Pu,v\rangle_{X_0}=\langle u,Pv\rangle_{X_0}$ for any $u,v\in X_0$, we have
\begin{equation}\label{add22}
\begin{aligned}
\langle P\big(u_j-\mathcal L_K^{-1}(\lambda
u_j+f(x,u_j))\big), u_j\rangle_{X_0} & =\|Pu_j\|^2_{X_0}-\lambda \langle Pu_j, \mathcal L_K^{-1}u_j\rangle_{X_0}\\
& \qquad -\langle Pu_j, \mathcal L_K^{-1}f(x,u_j)\rangle_{X_0},
\end{aligned}
\end{equation}
while, by \eqref{scarica}, we obtain
\begin{equation}\label{add33}
\begin{aligned}
\lambda \langle Pu_j, \mathcal L_K^{-1}u_j\rangle_{X_0}+\langle Pu_j, \mathcal L_K^{-1}f(x,u_j)\rangle_{X_0} & =\lambda \int_\Omega |Pu_j(x)|^2\,dx\\
& \qquad +\int_\Omega f(x,u_j(x))Pu_j(x)\,dx.
\end{aligned}
\end{equation}
Therefore, by \eqref{add11}-\eqref{add33}, we get
\begin{equation}\label{sec}
\begin{aligned}
\langle Q\nabla \mathcal J_\lambda(u_j),u_j\rangle_{X_0}& =2\mathcal J_\lambda(u_j)+
2\int_\Omega F(x,u_j(x))\, dx -\int_\Omega f(x,u_j(x))u_j(x)\,dx\\
& \qquad -\|Pu_j\|_{X_0}^2 +\lambda \int_\Omega |Pu_j(x)|^2\, dx+\int_\Omega f(x,u_j(x))Pu_j(x)\,dx.
\end{aligned}
\end{equation}

By \eqref{condlemma1}--\eqref{ujtoinfty} and \eqref{sec} we easily get that
\begin{equation}\label{Ffujto0}
\frac{\displaystyle{2\int_\Omega F(x,u_j(x))\, dx -\int_\Omega f(x,u_j(x))u_j(x)\,dx+\int_\Omega f(x,u_j(x))Pu_j(x)\,dx}}{\|u_j\|_{X_0}^q} \to 0
\end{equation}
as $j\to +\infty$.

Now, let us show that
\begin{equation}\label{uinfty0}
u_\infty\equiv 0\,.
\end{equation}
For this purpose, we firstly claim that
\begin{equation}\label{add44}
\frac{\displaystyle \int_\Omega
f(x,u_j(x))Pu_j(x)\,dx}{\|u_j\|_{X_0}^q} \to 0
\end{equation}
as $j\to +\infty$. Indeed, by \eqref{crescita} and \eqref{dominata20nablaadd}, we have that a.e. $x\in \Omega$
$$\begin{aligned}
|f(x,u_j(x))Pu_j(x)| & \leq \|Pu_j\|_\infty\big(a_1+a_2|u_j(x)|^{q-1}\big)\\
&\leq \|Pu_j\|_\infty\big(a_1+a_2|\ell_q(x)|^{q-1}\big)\,,
\end{aligned}$$
while, by \eqref{condlemma2} and the fact that all norms in $\mathbb H_{k+m-1}$ are equivalent,
$$\|Pu_j\|_\infty\to 0$$
as $j\to +\infty$. Note that $Pu_j\in L^\infty(\Omega)$, since all eigenfunctions of $\mathcal L_K$ are bounded (see \cite[Proposition~2.4]{sY}). Hence, \eqref{add44} holds.

By this and \eqref{Ffujto0} we obtain that
$$0\leftarrow \frac{\displaystyle{2\int_\Omega F(x,u_j(x))\, dx -\int_\Omega f(x,u_j(x))u_j(x)\,dx}}{\|u_j\|_{X_0}^q} \leq \frac{\displaystyle{(2-q)\int_\Omega F(x,u_j(x))\, dx}}{\|u_j\|_{X_0}^q}\leq 0\,,$$
as $j\to +\infty$, also thanks to \eqref{mu0}.
Hence,
\[
\frac{\displaystyle \int_\Omega F(x,u_j(x))\,
dx}{\|u_j\|_{X_0}^q}\to 0
\]
as $j\to +\infty$.

As a consequence of this, \eqref{a3a4} and \eqref{ujtoinfty} we have that
\[
\frac{\displaystyle \int_\Omega |u_j(x)|^q\,
dx}{\|u_j\|_{X_0}^q}\to 0,
\]
as $j\to +\infty$, which yields \eqref{uinfty0}, thanks to \eqref{convnorm}.

Now, by \eqref{condlemma1} and \eqref{ujtoinfty}, we get
$$
\frac{\mathcal J_\lambda(u_j)}{\|u_j\|_{X_0}^2}=\frac 1 2-\frac \lambda 2\frac{\displaystyle \int_\Omega |u_j(x)|^2\,
dx}{\|u_j\|_{X_0}^2}-\frac{\displaystyle \int_\Omega F(x,u_j(x))\,
dx}{\|u_j\|_{X_0}^2}\to 0,
$$
which, together with \eqref{convnorm} (here with $\nu=2$) and \eqref{uinfty0}, implies that
\begin{equation}\label{F2}
\frac{\displaystyle \int_\Omega F(x,u_j(x))\,
dx}{\|u_j\|_{X_0}^2}\to \frac 1 2
\end{equation}
as $j\to +\infty$.

Hence, as a consequence of \eqref{a3a4}, \eqref{ujtoinfty} and \eqref{F2}, there exists $C>0$ such that
\begin{equation}\label{q2}
\|u_j\|_{L^q(\Omega)}^q\leq C\|u_j\|_{X_0}^2 \mbox{ for every $j\in \NN$}.
\end{equation}

Now, let us show that
\begin{equation}\label{fPnorma2}
\frac{\displaystyle \int_\Omega f(x,u_j(x))Pu_j(x)\,dx}{\|u_j\|_{X_0}^2}\to 0
\end{equation}
as $j\to +\infty$.
Indeed, by \eqref{crescita} and the H\"older inequality, we have
$$\begin{aligned}
\frac{\displaystyle \int_\Omega |f(x,u_j(x))Pu_j(x)|\,
dx}{\|u_j\|_{X_0}^2} & \leq
\frac{\|Pu_j\|_\infty}{\|u_j\|_{X_0}^2}\left(a_1|\Omega|+a_2\int_\Omega |u_j(x)|^{q-1}\,
dx\right)\\
& \leq \|Pu_j\|_\infty
\left[\frac{a_1|\Omega|}{\|u_j\|_{X_0}^2}+\frac{a_2'}{\|u_j\|_{X_0}^{2/q}}
\left(\frac{\displaystyle\int_\Omega |u_j(x)|^q
dx}{\|u_j\|_{X_0}^2}\right)^{1-1/q}\right]\\
& \leq \|Pu_j\|_\infty
\left[\frac{a_1|\Omega|}{\|u_j\|_{X_0}^2}+\frac{a_2'C^{1-1/q}}{\|u_j\|_{X_0}^{2/q}}\right]\,,
\end{aligned}$$
thanks to \eqref{q2}. Thus, \eqref{fPnorma2} follows from this, \eqref{condlemma2} and \eqref{ujtoinfty}.

Finally, dividing both sides of \eqref{sec} by $\|u_j\|_{X_0}^2$, using \eqref{condlemma1}--\eqref{ujtoinfty} and \eqref{fPnorma2} we get
$$\frac{\displaystyle{2\int_\Omega F(x,u_j(x))\, dx -\int_\Omega f(x,u_j(x))u_j(x)\,dx}}{\|u_j\|_{X_0}^2} \to 0\,,$$
which, arguing as above, yields
$$\frac{\displaystyle{2\int_\Omega F(x,u_j(x))\, dx}}{\|u_j\|_{X_0}^2} \to 0\,,$$
as $j\to +\infty$. Of course, this is in contradiction with \eqref{F2}. The proof of Lemma~\ref{lemmanabla2} is complete.
\end{proof}

As a consequence of Lemma~\ref{lemmanabla1} and Lemma~\ref{lemmanabla2}, we get the following result on the validity of the $\nabla$-condition for $\mathcal J_\lambda$.
\begin{proposition}\label{propnabla}
Let $k$ and $m$ in $\NN$ be such that $\lambda_{k-1}<\lambda_k=\dots = \lambda_{k+m-1}<\lambda_{k+m}$ and let $f$ be a function satisfying conditions~\eqref{caratheodory}--\eqref{mu0}.

Then, for any $\sigma>0$ there exists $\varepsilon_\sigma>0$ such that for any $\lambda\in [\lambda_{k-1}+\sigma, \lambda_{k+m}-\sigma]$ and for any $\varepsilon', \varepsilon''\in (0, \varepsilon_\sigma)$, with $\varepsilon'<\varepsilon''$, functional~$\mathcal J_\lambda$ satisfies
the $(\nabla)(\mathcal J_\lambda, \mathbb H_{k-1}\oplus \mathbb P_{k+m-1}, \varepsilon', \varepsilon'')$ condition.
\end{proposition}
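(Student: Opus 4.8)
The plan is to prove Proposition~\ref{propnabla} by contradiction, combining the two preliminary lemmas. Fix $\sigma>0$ and let $\varepsilon_\sigma>0$ be the threshold furnished by Lemma~\ref{lemmanabla1}. Suppose the conclusion fails: then for some $\lambda\in[\lambda_{k-1}+\sigma,\lambda_{k+m}-\sigma]$ and some $\varepsilon',\varepsilon''\in(0,\varepsilon_\sigma)$ with $\varepsilon'<\varepsilon''$, the condition $(\nabla)(\mathcal J_\lambda,\mathbb H_{k-1}\oplus\mathbb P_{k+m-1},\varepsilon',\varepsilon'')$ is violated. Writing $C:=\mathbb H_{k-1}\oplus\mathbb P_{k+m-1}$, this means the relevant infimum is zero, so there exists a sequence $\{u_j\}_{j\in\NN}\subset X_0$ with
$$
\varepsilon'\leq \mathcal J_\lambda(u_j)\leq \varepsilon'',\qquad \operatorname{dist}(u_j,C)\leq\gamma_j\to 0,\qquad \|P_C\nabla\mathcal J_\lambda(u_j)\|_{X_0}\to 0,
$$
where the distance can be made to tend to $0$ by choosing the violating sequence along $\gamma=1/j$. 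Since $\operatorname{dist}(u_j,C)=\|P_{\operatorname{span}\{e_k,\dots,e_{k+m-1}\}}u_j\|_{X_0}$ (the orthogonal complement of $C$ in $X_0$ is exactly $\operatorname{span}\{e_k,\dots,e_{k+m-1}\}$), this gives precisely hypothesis~\eqref{condlemma2}, while $\{\mathcal J_\lambda(u_j)\}$ is bounded, giving~\eqref{condlemma1}, and $P_C\nabla\mathcal J_\lambda(u_j)\to 0$ gives~\eqref{condlemma3}.

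The core of the argument is then to feed this sequence into Lemma~\ref{lemmanabla2}, which yields that $\{u_j\}_{j\in\NN}$ is bounded in $X_0$. Granted boundedness, I would extract (up to subsequence) a weak limit $u_\infty\in X_0$, argue as in the Palais--Smale analysis of Proposition~\ref{lemmaPS} that the convergence upgrades to strong convergence $u_j\to u_\infty$ in $X_0$, and pass to the limit in the three conditions above. Strong convergence lets me use the continuity of $\mathcal J_\lambda$ and of $\nabla\mathcal J_\lambda$: the limit $u_\infty$ satisfies $\varepsilon'\leq\mathcal J_\lambda(u_\infty)\leq\varepsilon''$, lies in $C$ (because its $\operatorname{span}\{e_k,\dots,e_{k+m-1}\}$-component is the limit of the vanishing components $P_{\operatorname{span}\{e_k,\dots,e_{k+m-1}\}}u_j$), and has $P_C\nabla\mathcal J_\lambda(u_\infty)=0$. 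The last equality says exactly that $u_\infty$ is a critical point of $\mathcal J_\lambda$ \emph{constrained} to $C=\mathbb H_{k-1}\oplus\mathbb P_{k+m-1}$, since for $u_\infty\in C$ the constrained differential is $P_C\nabla\mathcal J_\lambda(u_\infty)$.

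At this point Lemma~\ref{lemmanabla1} delivers the contradiction: its critical level window $[-\varepsilon_\sigma,\varepsilon_\sigma]$ contains $[\varepsilon',\varepsilon'']\subset(0,\varepsilon_\sigma)$, so the only constrained critical point of $\mathcal J_\lambda$ on $C$ with critical value in that window is the trivial one $u_\infty\equiv 0$. But $\mathcal J_\lambda(0)=0$ (indeed $F(x,0)=0$ a.e.\ by~\eqref{mu0}), which contradicts $\mathcal J_\lambda(u_\infty)\geq\varepsilon'>0$. Hence the $(\nabla)$-condition must hold, proving the proposition. The main obstacle I anticipate is the upgrade from weak to strong convergence: one must verify that the $\nabla$-gradient structure~\eqref{grad}, together with the compactness of $\mathcal L_K^{-1}$ and the condition~\eqref{condlemma3} on the $C$-component only, still forces norm convergence $\|u_j\|_{X_0}\to\|u_\infty\|_{X_0}$. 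This is slightly more delicate than in Proposition~\ref{lemmaPS} because we control only the projected gradient $P_C\nabla\mathcal J_\lambda(u_j)$ rather than the full gradient; however, the complementary component lives in the finite-dimensional space $\operatorname{span}\{e_k,\dots,e_{k+m-1}\}$, where~\eqref{condlemma2} already forces strong convergence to $0$, so the splitting $u_j=P_{\operatorname{span}\{e_k,\dots,e_{k+m-1}\}}u_j+Qu_j$ reduces the problem to establishing strong convergence of the $Q$-component via the compact operator $\mathcal L_K^{-1}$ exactly as in the Palais--Smale case.
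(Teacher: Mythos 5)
Your proposal is correct and follows essentially the same route as the paper: contradiction, Lemma~\ref{lemmanabla2} for boundedness, upgrade to strong convergence via the representation \eqref{grad} of the gradient together with the compactness of $\mathcal L_K^{-1}$ and the strong convergence of the finite-dimensional component from \eqref{condlemma2}, identification of the limit as a constrained critical point, and finally Lemma~\ref{lemmanabla1} combined with $\mathcal J_\lambda(u_\infty)\geq\varepsilon'>0$ to reach the contradiction. The only cosmetic difference is in how the negation of the statement is set up (you fix the $\varepsilon_\sigma$ from Lemma~\ref{lemmanabla1} directly, whereas the paper negates the full quantifier string), but the two are logically equivalent.
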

\begin{proof}
Assume by contradiction that there exists $\sigma>0$ such that for every $\ve_0>0$ there exist $\bar \lambda\in [\lambda_{k-1}+\sigma,\lambda_{k+m}-\sigma]$
and $\ve'<\ve''$ in $(0,\ve_0)$ such that
\begin{equation}\label{notnabla}
(\nabla)(\mathcal J_{\bar\lambda}, \mathbb H_{k-1}\oplus \mathbb P_{k+m-1}, \varepsilon', \varepsilon'')\,\,\, \mbox{does not hold}.
\end{equation}
Associated to such a $\sigma$, take $\ve_0>0$ as provided by Lemma~\ref{lemmanabla1}.

By \eqref{notnabla} we can find a sequence $\{u_j\}_{j\in \NN}$ in $X_0$ such that
\begin{equation}\label{ujadd}
\begin{aligned}
& \mathcal J_{\bar\lambda}(u_j)\in [\ve',\ve'']\,\,\,  \mbox{for all}\,\, j\in \NN\,,\\
& dist(u_j,\mathbb H_{k-1}\oplus \mathbb P_{k+m-1})\to 0\\
& P_{\mathbb H_{k-1}\oplus \mathbb P_{k+m-1}}\nabla \mathcal J_{\bar\lambda}(u_j)\to 0\,\,\, \mbox{in}\,\, X_0
\end{aligned}
\end{equation}
as $j\to +\infty$.

By Lemma~\ref{lemmanabla2} we know that $\{u_j\}_{j\in \NN}$ is bounded in $X_0$, and so we can assume that for some $u_\infty\in X_0$
\begin{equation}\label{convujadd}
\begin{aligned}
& u_j\rightharpoonup u_\infty\,\,\, \mbox{in}\,\, X_0\\
& u_j\to u_\infty\,\,\, \mbox{in}\,\, L^\nu(\Omega)\,\,\, \mbox{for any}\,\, \nu\in[1, 2^*)\\
& u_j\to u_\infty\,\,\, \mbox{a.e. in}\,\, \Omega
\end{aligned}
\end{equation}
as $j\to +\infty$ and for any $\nu\in [1, 2^*)$ there exists $\ell_\nu\in L^\nu(\RR^n)$ such that
\begin{equation}\label{dominata20addbis}
|u_j(x)|\leq \ell_\nu(x) \quad \mbox{a.e. in}\,\, \RR^n\,\quad \mbox{for any}\,\,j\in \NN\,.
\end{equation}

Now, note that by \eqref{grad} we can write
\begin{equation}\label{add55}
\begin{aligned}
P_{\mathbb H_{k-1}\oplus \mathbb P_{k+m-1}} \nabla \mathcal J_{\bar\lambda}(u_j) & =u_j
-P_{\footnotesize{\mbox{span}}\left\{e_k,\ldots,\,e_{k+m-1}\right\}}u_j\\
& \qquad \qquad \qquad -P_{\mathbb H_{k-1}\oplus \mathbb P_{k+m-1}}\mathcal L_K^{-1}(\bar\lambda u_j+f(x,u_j)).
\end{aligned}
\end{equation}
Hence, recalling that $\mathcal L_K^{-1}:L^{q'}(\Omega)\to X_0$ is a
compact operator (see  Section~\ref{secgrad}), and that $f(x,u_j)\to f(x,u_\infty)$ in $L^{q'}(\Omega)$ by \eqref{caratheodory}-\eqref{crescita} and \eqref{dominata20addbis}, we get that
$$P_{\mathbb H_{k-1}\oplus \mathbb P_{k+m-1}}\mathcal L_K^{-1}(\bar \lambda
u_j+f(x,u_j))\to P_{\mathbb H_{k-1}\oplus \mathbb P_{k+m-1}}\mathcal L_K^{-1}(\bar \lambda
u_\infty+f(x,u_\infty))$$ as $j\to +\infty$ and so, taking into account \eqref{ujadd}, \eqref{convujadd} and \eqref{add55}, we deduce that
\begin{equation}\label{ujforte}
u_j\to P_{\mathbb H_{k-1}\oplus \mathbb P_{k+m-1}}\mathcal L_K^{-1}(\bar \lambda
u_\infty+f(x,u_\infty))=:u_\infty\,\,\, \mbox{in}\,\, X_0
\end{equation}
as $j\to +\infty$.

Furthermore, again by \eqref{ujadd} we have that
$$\langle \nabla \mathcal J_{\bar \lambda}(u_j), \varphi \rangle_{X_0} \to 0\,\,\, \mbox{for any}\,\, \varphi \in \mathbb H_{k-1}\oplus \mathbb P_{k+m-1}\,,$$
that is, taking into account \eqref{gradJ},
\begin{equation}\label{ujcritico}
\langle \mathcal J'_{\bar\lambda}(u_j), \varphi\rangle= \langle u_j, \varphi\rangle_{X_0}
-\lambda \int_\Omega u_j(x)\varphi(x)\,dx-\int_\Omega f(x, u_j(x))\varphi(x)\,dx \to 0
\end{equation}
as $j\to +\infty$. Thus, by \eqref{crescita}, \eqref{convujadd}, \eqref{ujforte} and \eqref{ujcritico}, we obtain that
$$\langle \mathcal J'_{\bar\lambda}(u_\infty), \varphi\rangle= \langle u_\infty, \varphi\rangle_{X_0}
-\lambda \int_\Omega u_\infty(x)\varphi(x)\,dx-\int_\Omega f(x, u_\infty(x))\varphi(x)\,dx$$
for any $\varphi \in \mathbb H_{k-1}\oplus \mathbb P_{k+m-1}$, i.e. $u_\infty$
is a critical point of $\mathcal J_{\bar\lambda}$ constrained on $\mathbb H_{k-1}\oplus \mathbb P_{k+m-1}$.

Hence, Lemma~\ref{lemmanabla1} yields that $u_\infty\equiv 0$. However, $0<\ve'\leq \mathcal J_{\bar\lambda}(u_j)$ for every
$j\in \NN$, so that, by continuity of $\mathcal J_{\bar\lambda}$, we find $\mathcal J_{\bar\lambda}(u_\infty)>0$, which is absurd. This completes the proof of Proposition~\ref{propnabla}.
\end{proof}

\section{Proof of main theorem}\label{sec:proofthmain}
This section is devoted to the proof of main result of the paper, concerning the existence of multiple solutions for problem~\eqref{problemaK}. In order to get this result we apply the following abstract critical point theorem (\cite[Theorem~2.10]{marsac:sns}):
\begin{theorem}[Sphere-torus linking with mixed type assumptions]\label{thmarinosaccon}
Let $H$ be a Hilbert space and $X_1, X_2, X_3$ be three subspaces of $H$ such that $H=X_1\oplus X_2 \oplus X_3$ with $0<\mbox{dim}\,X_i<\infty$ for $i=1,2$\,. Let $\mathcal I:H\to \RR$ be a $C^{1,1}$ functional. Let $\rho, \rho', \rho'', \rho_1$ be such that $0<\rho_1$, $0\leq \rho'<\rho<\rho''$ and
$$\Delta=\{u\in X_1\oplus X_2 : \rho'\leq \|P_2u\|\leq \rho'', \|P_1u\|\leq \rho_1\}\,\,\, \mbox{and}\,\,\,T=\partial_{X_1\oplus X_2}\Delta\,,$$
where $P_i:H\to X_i$ is the orthogonal projection of $H$ onto $X_i$\,, $i=1,2$\,, and
$$S_{23}(\rho)=\{u\in X_2\oplus X_3 : \|u\|=\rho\}\,\,\, \mbox{and}\,\,\,B_{23}(\rho)=\{u\in X_2\oplus X_3 : \|u\|<\rho\}\,.$$
Assume that
$$a'=\sup \mathcal I(T)<\inf \mathcal I(S_{23}(\rho))=a''\,.$$
Let $a,b$ be such that $a'<a<a''$, $b>\sup \mathcal I(\Delta)$ and
$$\mbox{the assumption}\,\,\, (\nabla)(\mathcal I, X_1\oplus X_3, a, b)\,\,\, \mbox{holds};$$
$$\mbox{the Palais--Smale condition holds at any level}\,\,\, c\in [a,b]\,.$$
Then, $\mathcal I$ has at least two critical points in $\mathcal I^{-1}([a,b])$\,.

If, furthermore,
$$a_1<\inf \mathcal I(B_{23}(\rho))>-\infty$$
and the Palais--Smale condition holds at every $c\in [a_1, b]$, then $\mathcal I$ has another critical level between $a_1$ and $a'$\,.
\end{theorem}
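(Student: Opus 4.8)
The plan is to prove this as a \emph{linking-type} theorem in which the classical sphere--torus linking produces one critical point, while the mixed-type hypothesis $(\nabla)(\mathcal I, X_1\oplus X_3, a, b)$ is precisely what upgrades this to a \emph{second} critical point in $\mathcal I^{-1}([a,b])$; the optional hypothesis then yields a \emph{third}, lower, critical level. Throughout I would exploit that $\mathcal I$ is $C^{1,1}$, so $\nabla\mathcal I$ is locally Lipschitz and the Cauchy flow $\dot\eta=-\nabla\mathcal I(\eta)$ is uniquely integrable: this is what makes every deformation argument, and in particular the confinement argument driven by the $\nabla$-condition, rigorous. I write $C:=X_1\oplus X_3$, denote by $P_C$ the orthogonal projection onto $C$, and abbreviate $\mathcal I^c:=\{u\in H:\mathcal I(u)\le c\}$.

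First I would establish the purely topological \emph{linking of $T$ with $S_{23}(\rho)$}: every continuous $h\colon\Delta\to H$ with $h|_T=\mathrm{id}$ satisfies $h(\Delta)\cap S_{23}(\rho)\neq\emptyset$. Since $\Delta\subset X_1\oplus X_2$ is, up to homeomorphism, the product of a ball in $X_1$ with an annulus in $X_2$, this is a Brouwer-degree computation: one composes $h$ with $u\mapsto(P_1u,\|P_2u\|,P_3u)$ and checks that the constraints $\rho'\le\|P_2u\|\le\rho''$ and $\|P_1u\|\le\rho_1$ prevent the degree from vanishing, exactly as in the standard torus--sphere linking. Setting $\Gamma=\{h\in C(\Delta,H):h|_T=\mathrm{id}\}$ and $c_1=\inf_{h\in\Gamma}\sup_{u\in\Delta}\mathcal I(h(u))$, the linking gives $c_1\ge\inf_{S_{23}(\rho)}\mathcal I=a''>a$, while $h=\mathrm{id}$ gives $c_1\le\sup_\Delta\mathcal I<b$. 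Hence $c_1\in(a,b)$, and the $C^{1,1}$ deformation lemma together with the Palais--Smale condition on $[a,b]$ makes $c_1$ a critical value, producing the first critical point.

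The heart of the matter, and the step I expect to be the main obstacle, is to extract a \emph{second} critical point from the mixed-type hypothesis. Here I would first record a clean \emph{$C$-confined deformation lemma}. Writing $P_C=P_1+P_3=\mathrm{Id}-P_2$, along the flow of the vector field $-P_C\nabla\mathcal I$ one has $\frac{d}{dt}\mathcal I(\eta)=-\|P_C\nabla\mathcal I(\eta)\|^2$, while the $X_2$-component $P_2\eta$ stays frozen, so that $\mathrm{dist}(\eta,C)=\|P_2\eta\|$ is preserved. Consequently, on the slab $\{\mathrm{dist}(\cdot,C)\le\gamma\}$ the $(\nabla)$-bound $\|P_C\nabla\mathcal I\|\ge\gamma_0>0$ forces $\mathcal I$ to decrease at rate $\ge\gamma_0^2$, so every point of $\{a\le\mathcal I\le b\}\cap\{\mathrm{dist}(\cdot,C)\le\gamma\}$ is driven, \emph{without leaving the slab}, into $\mathcal I^a$; in other words, the portion of the strip lying near $C$ is topologically negligible. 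The second critical point then comes from playing this confinement against the linking: the annular constraint $\rho'\le\|P_2u\|\le\rho''$ defining $\Delta$ and the radius $\rho$ of $S_{23}(\rho)$ leave genuine topology in the $X_2$-direction that cannot be absorbed into the slab, so a second minimax level $c_2$, built over deformations that are not permitted to retract through the slab, satisfies $a''\le c_1\le c_2\le b$ and is critical. One then argues that $c_1=c_2$ can occur only if the critical set at that level is topologically nontrivial, again yielding two distinct critical points. Matching the confinement deformation with the relative (co)homological bookkeeping that counts these two linking classes, say via a nontrivial class of $(\mathcal I^b\cap N,\mathcal I^a\cap N)$ for a neighborhood $N$ of $C$, is the delicate part, and is exactly what the $C^{1,1}$ regularity and the uniform lower bound $\gamma_0$ are there to support.

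Finally, under the additional hypothesis I would obtain a \emph{third} critical level in $(a_1,a')$. Since $a'<a''\le a<b$, any critical value in $(a_1,a')$ is automatically distinct from $c_1,c_2$. The sphere $S_{23}(\rho)$ sits at level $\ge a''>a'$ and thus caps, in $X_2\oplus X_3$, the low-energy ball $B_{23}(\rho)$, on which $\mathcal I$ is bounded below with $a_1<\inf_{B_{23}(\rho)}\mathcal I$, while the torus $T$ offers a lower-energy exit at level $\le a'$. Taking a minimax over deformation classes joining the capped ball to the region detected by $T$ produces a value $c_3$, and the bookkeeping $\sup_T\mathcal I=a'$ together with $a_1<\inf_{B_{23}(\rho)}\mathcal I$ pins it into $(a_1,a')$; the Palais--Smale condition now assumed on the larger interval $[a_1,b]$ upgrades $c_3$ to a genuine critical level. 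The only care needed is to verify the strict inequalities $a_1<c_3<a'$, which is what guarantees that the new critical point is distinct from the two produced in the first part.
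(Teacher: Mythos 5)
Your proposal cannot be compared against ``the paper's own proof'' for a simple reason: this paper does not prove Theorem~\ref{thmarinosaccon} at all. It is imported verbatim as \cite[Theorem~2.10]{marsac:sns} and used as a black box in Section~\ref{sec:proofthmain}; the only things the authors prove are the hypotheses (Palais--Smale, the geometry, the $(\nabla)$-condition) for their functional $\mathcal J_\lambda$. So your attempt has to stand on its own as a reconstruction of Marino--Saccon's argument. Parts of it do identify the right ingredients: the observation that the flow of $-P_C\nabla\mathcal I$ freezes $P_2\eta$, hence preserves $\mathrm{dist}(\cdot,C)$, while decreasing $\mathcal I$ at rate $\|P_C\nabla\mathcal I\|^2\geq\gamma_0^2$, is exactly the mechanism by which the $(\nabla)$-condition makes the strip near $C=X_1\oplus X_3$ deformation-negligible, and the sphere--torus intersection property is indeed proved by a finite-dimensional degree argument (though not by the map you wrote: $(P_1u,\|P_2u\|,P_3u)$ has infinite-dimensional target when $\dim X_3=\infty$; one must project out $X_3$, e.g.\ work slice-wise with $u\mapsto(P_1h(u),\|(\mathrm{Id}-P_1)h(u)\|)$).

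The genuine gap is at the theorem's center of gravity: the \emph{second} critical point in $\mathcal I^{-1}([a,b])$, which is what distinguishes a $\nabla$-theorem from the classical linking theorem you prove in your first step. What you offer --- ``a second minimax level $c_2$ built over deformations that are not permitted to retract through the slab,'' plus the remark that $c_1=c_2$ forces a ``topologically nontrivial'' critical set --- is a statement of the goal, not an argument. You never define the second minimax class, never prove an intersection property for it, never explain how the $C$-confined flow is glued (by cut-off) to the ordinary negative-gradient flow outside the slab so as to yield a single admissible deformation respecting both classes, and never supply the Ljusternik--Schnirelmann-type multiplicity argument needed in the degenerate case $c_1=c_2$; this is precisely the content of Marino--Saccon's proof, which exploits the product structure (ball in $X_1$)$\times$(annulus in $X_2$) of $\Delta$ to manufacture two homotopically distinct constraints. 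The third part has the same defect: ``a minimax over deformation classes joining the capped ball to the region detected by $T$'' names no class, and no reason is given why the resulting value is critical and lies strictly in $(a_1,a')$; the workable route is a contradiction argument (if $[a_1,a']$ contained no critical value, the (PS)-deformation would push $T\subset\mathcal I^{a'}$ into $\mathcal I^{a_1}$, a set disjoint from $\overline{B_{23}(\rho)}$ since $\inf\mathcal I(B_{23}(\rho))>a_1$, and this deformation never crosses $S_{23}(\rho)$ because its levels stay below $a''$, contradicting the linking), and it must actually be written out. Two smaller slips: your chain ``$a'<a''\le a<b$'' contradicts the hypothesis $a<a''$, and the vector field $-P_C\nabla\mathcal I$ is only bounded from \emph{below} by the $(\nabla)$-condition, so without normalization or truncation its flow need not exist on the time interval your confinement argument requires.
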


First of all, we need the following result:
\begin{lemma}\label{limite}
Let $k$ and $m$ in $\NN$ be such that $\lambda_{k-1}<\lambda<\lambda_k=\dots = \lambda_{k+m-1}<\lambda_{k+m}$ and let $f$ satisfy \eqref{caratheodory}--\eqref{mu0}.

Then, the following relation is verified:
\[
\lim_{\lambda \to \lambda_k}\sup_{u\in \mathbb H_{k+m-1}} \mathcal J_\lambda(u)=0.
\]
\end{lemma}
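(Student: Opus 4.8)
The plan is to estimate $\mathcal J_\lambda$ from above on the finite-dimensional space $\mathbb H_{k+m-1}$ by exploiting the anti-Poincar\'e inequality \eqref{antipoincare} together with two complementary uses of the primitive $F$. First I would record the trivial lower bound: since $F(x,0)=0$ we have $\mathcal J_\lambda(0)=0$, so that $\sup_{u\in\mathbb H_{k+m-1}}\mathcal J_\lambda(u)\ge 0$ for every admissible $\lambda$. It therefore suffices to bound this supremum from above by a quantity that vanishes as $\lambda\to\lambda_k$.

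For $u\in\mathbb H_{k+m-1}$, inequality \eqref{antipoincare} (with $\lambda_{k+m-1}=\lambda_k$) gives $\|u\|_{X_0}^2\le\lambda_k\|u\|_{L^2(\Omega)}^2$, whence
\[
\mathcal J_\lambda(u)\le\frac{\lambda_k-\lambda}{2\lambda_k}\,\|u\|_{X_0}^2-\int_\Omega F(x,u(x))\,dx .
\]
I would then treat the integral term in two different ways depending on the size of $\|u\|_{X_0}$. On a fixed ball $\{\|u\|_{X_0}\le\eta\}$ I simply discard it using $F\ge 0$, which is a consequence of the left inequality in \eqref{mu0}, obtaining $\mathcal J_\lambda(u)\le\frac{\lambda_k-\lambda}{2\lambda_k}\eta^2$. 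Outside this ball I instead invoke \eqref{a3a4} together with the equivalence of the $\|\cdot\|_{X_0}$- and $\|\cdot\|_{L^q(\Omega)}$-norms on the finite-dimensional space $\mathbb H_{k+m-1}$, which yields $\mathcal J_\lambda(u)\le A\|u\|_{X_0}^2-B\|u\|_{X_0}^q+a_4|\Omega|$ with $B>0$ fixed and $A=\tfrac{\lambda_k-\lambda}{2\lambda_k}$ bounded, say by some $A_0$, once $\lambda$ is restricted to a fixed left neighborhood of $\lambda_k$.

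The crucial point, and the main obstacle, is the fixed positive constant $a_4|\Omega|$: a single global estimate would only produce $\limsup_{\lambda\to\lambda_k}\sup\mathcal J_\lambda\le a_4|\Omega|$ rather than $0$. To absorb it I would exploit that $q>2$ and choose the radius $\eta$ large enough, depending only on $a_3$, $q$, $|\Omega|$ and the fixed bound $A_0$ (hence independent of $\lambda$), so that $Bt^q-A_0 t^2\ge a_4|\Omega|$ for all $t\ge\eta$; since $A\le A_0$ this forces $\mathcal J_\lambda(u)\le 0$ whenever $\|u\|_{X_0}\ge\eta$. With $\eta$ thus frozen, the two regimes combine to give
\[
0\le\sup_{u\in\mathbb H_{k+m-1}}\mathcal J_\lambda(u)\le\frac{\lambda_k-\lambda}{2\lambda_k}\,\eta^2 ,
\]
and letting $\lambda\to\lambda_k$ (with $\lambda<\lambda_k$) yields the claimed limit. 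I expect the only delicate step to be the uniform choice of $\eta$: it must be made so that the superlinear term dominates $a_4|\Omega|$ simultaneously for all $\lambda$ close to $\lambda_k$, which is exactly why the bound $A\le A_0$ on the quadratic coefficient is isolated beforehand.
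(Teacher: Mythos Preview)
Your argument is correct, and it differs from the paper's in a useful way. The paper proceeds by contradiction: assuming $\mu_j\to\lambda_k$ with $\sup_{\mathbb H_{k+m-1}}\mathcal J_{\mu_j}\ge\varepsilon>0$, it distinguishes the cases where the maximizers $u_j$ are bounded (pass to a limit in the finite-dimensional space and use \eqref{antipoincare} together with $F\ge 0$ to get $\mathcal J_{\lambda_k}(u_\infty)\le 0$) or unbounded (use \eqref{a3a4} and norm equivalence so that the $q$-term drives $\mathcal J_{\mu_j}(u_j)\to-\infty$). Your proof is the direct, quantitative counterpart of exactly these two cases: the small-norm regime replaces the compactness step by the explicit bound $\frac{\lambda_k-\lambda}{2\lambda_k}\eta^2$, while the large-norm regime is the same superlinearity argument, made uniform in $\lambda$ via the bound $A\le A_0$. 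What you gain is an explicit rate (the supremum is $O(\lambda_k-\lambda)$) and no appeal to sequential compactness; what the paper's argument gains is that it never needs to pin down the threshold $\eta$ or worry about absorbing the constant $a_4|\Omega|$, since the contradiction in each case is immediate. Both rest on the same three ingredients: \eqref{antipoincare} with $\lambda_{k+m-1}=\lambda_k$, positivity of $F$ from \eqref{mu0}, and the coercivity coming from \eqref{a3a4} plus finite-dimensionality.
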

\begin{proof}
First of all, note that $\mathcal J_\lambda$ attains a maximum in $\mathbb H_{k+m-1}$ by \eqref{a3a4}.

Now, assume by contradiction that there exist $\{\mu_j\}_{j\in \NN}$, such that
\begin{equation}\label{muj}
\mu_j\to \lambda_k
\end{equation}
as $j\to +\infty$, $\{u_j\}_{j\in \NN}$
in $\mathbb H_{k+m-1}$ and $\ve>0$ such that for any $j\in \NN$
\begin{equation}\label{sup}
\mathcal J_{\mu_j}(u_j)=\sup_{u\in \mathbb H_{k+m-1}} \mathcal J_{\mu_j} (u)\geq \ve.
\end{equation}

If $\{u_j\}_{j\in \NN}$ is bounded in $X_0$, we can assume that $u_j\to u_\infty$ in $X_0$ as $j\to +\infty$ for some $u_\infty \in \mathbb H_{k+m-1}$. Then, by \eqref{cond22F}, \eqref{muj} and the fact that $\mathbb H_{k+m-1}$ is finite-dimensional, we have that
$$\mathcal J_{\mu_j}(u_j)\to \mathcal J_{\lambda_k}(u_\infty)$$
as $j\to +\infty$, and so by \eqref{mu0}, \eqref{antipoincare} and \eqref{sup}, we immediately get
\[
\begin{aligned}
\ve\leq \mathcal J_{\lambda_k}(u_\infty) & =\frac{1}{2}\|u_\infty\|_{X_0}^2-\frac{\lambda_k}{2}\int_\Omega
|u_\infty(x)|^2\, dx -\int_\Omega F(x,u_\infty(x))\, dx\\
& \leq \frac 1 2 (\lambda_{k+m-1}-\lambda_k)\int_\Omega
|u_\infty(x)|^2\, dx -\int_\Omega F(x,u_\infty(x))\, dx\leq 0,
\end{aligned}
\]
and a contradiction arises.

Otherwise, if $\{u_j\}_{j\in\NN}$ is unbounded in $X_0$, we can suppose that
\begin{equation}\label{ujadd66}
\|u_j\|_{X_0}\to +\infty
\end{equation}
as $j\to +\infty$. Therefore, \eqref{sup} and \eqref{a3a4} imply
\[
0<\ve\leq \mathcal J_{\mu_j}(u_j)\leq \frac{1}{2}\|u_j\|_{X_0}^2-
\frac{\mu_j}{2}\int_\Omega |u_j(x)|^2\, dx-a_3\int_\Omega |u_j(x)|^q dx+a_4|\Omega|.
\]
But all norms are equivalent in $\mathbb H_{k+m-1}$, so that the right hand side of the previous inequality tends to $-\infty$ as $j\to +\infty$, since $q>2$ by assumption, and \eqref{ujadd66} holds true. Hence, a contradiction arises as well.
\end{proof}

Now, we can prove our multiplicity result for problem~\eqref{problemaK}. The idea consists in applying Theorem~\ref{thmarinosaccon} to $\mathcal J_\lambda$, in connection with a classical Linking Theorem (see \cite[Theorem~5.3]{rabinowitz}). First we prove:
\begin{proposition}\label{2soluzioni}
Let $k$ and $m$ in $\NN$ be such that $\lambda_{k-1}<\lambda<\lambda_k=\dots = \lambda_{k+m-1}<\lambda_{k+m}$ and let $f$ satisfy \eqref{caratheodory}--\eqref{mu0}.

Then, there exists a left neighborhood $\mathcal O_k$ of $\lambda_k$ such that for all $\lambda \in \mathcal O_k$, problem~\eqref{problemaK} has two nontrivial solutions $u_i$ such that
$$0<\mathcal J_\lambda(u_i)\leq \sup_{u\in \mathbb H_{k+m-1}} \mathcal J_\lambda(u)$$
for $i=1,2$.
\end{proposition}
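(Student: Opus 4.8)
The plan is to apply the first part of Theorem~\ref{thmarinosaccon} to the functional $\mathcal I=\mathcal J_\lambda$ on $H=X_0$, with the splitting $X_1=\mathbb H_{k-1}$, $X_2=\mathrm{span}\{e_k,\dots,e_{k+m-1}\}$ and $X_3=\mathbb P_{k+m-1}$, exactly as in Section~\ref{sec:geometry}. Note that $X_1\oplus X_2=\mathbb H_{k+m-1}$ and $X_1\oplus X_3=\mathbb H_{k-1}\oplus\mathbb P_{k+m-1}$, so the spaces occurring in the geometric and in the $\nabla$-propositions are precisely the ones dictated by the abstract scheme. Two of the hypotheses are already at our disposal: the Palais--Smale condition holds at every level by Proposition~\ref{lemmaPS}, and $\mathcal J_\lambda$ meets the differentiability hypothesis of the theorem, its gradient being given by \eqref{grad}, whose nonlinear part factors through the compact operator $\mathcal L_K^{-1}$. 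The remaining task is to exhibit levels $a<b$ for which the linking inequality and the $(\nabla)$-condition hold simultaneously, and this is where the one-sided neighborhood $\mathcal O_k$ must be chosen. We use only the first (two-critical-point) part of the theorem; the third solution is reserved for the proof of Theorem~\ref{thmain}.

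First I would fix $\sigma>0$ small enough that $\lambda_k\in(\lambda_{k-1}+\sigma,\lambda_{k+m}-\sigma)$, which is possible since $\lambda_{k-1}<\lambda_k<\lambda_{k+m}$. For this $\sigma$, Proposition~\ref{propnabla} furnishes a threshold $\varepsilon_\sigma>0$ such that $(\nabla)(\mathcal J_\lambda,\mathbb H_{k-1}\oplus\mathbb P_{k+m-1},\varepsilon',\varepsilon'')$ holds for every $\lambda\in[\lambda_{k-1}+\sigma,\lambda_{k+m}-\sigma]$ and every $0<\varepsilon'<\varepsilon''<\varepsilon_\sigma$. Then, invoking Lemma~\ref{limite}, and using that $\sup_{u\in\mathbb H_{k+m-1}}\mathcal J_\lambda(u)\to 0$ as $\lambda\to\lambda_k$, I would select a left neighborhood $\mathcal O_k\subset[\lambda_{k-1}+\sigma,\lambda_k)$ of $\lambda_k$ so small that $\sup_{u\in\mathbb H_{k+m-1}}\mathcal J_\lambda(u)<\varepsilon_\sigma$ for every $\lambda\in\mathcal O_k$. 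This single smallness requirement is what forces the neighborhood to be one-sided and close to $\lambda_k$.

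Now fix $\lambda\in\mathcal O_k$. Proposition~\ref{propgeometria} supplies radii $R>\rho>0$ and, upon identifying its two competing sets with the torus boundary $T$ and the sphere $S_{23}(\rho)$ of the abstract scheme, yields $a':=\sup_T\mathcal J_\lambda\le 0<\inf_{S_{23}(\rho)}\mathcal J_\lambda=:a''$. Since $S_{23}(\rho)$ links $T$, the identity map already gives $\Delta\cap S_{23}(\rho)\neq\emptyset$, whence $a''\le\sup_\Delta\mathcal J_\lambda\le\sup_{u\in\mathbb H_{k+m-1}}\mathcal J_\lambda(u)<\varepsilon_\sigma$, the middle inequality using $\Delta\subset X_1\oplus X_2=\mathbb H_{k+m-1}$ and the last one the choice of $\mathcal O_k$. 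I would then pick $a\in(0,a'')$ and $b\in(\sup_\Delta\mathcal J_\lambda,\varepsilon_\sigma)$; this is consistent because $0<a<a''\le\sup_\Delta\mathcal J_\lambda<b<\varepsilon_\sigma$, so in particular $a'<a<a''$, $b>\sup_\Delta\mathcal J_\lambda$, and $[a,b]\subset(0,\varepsilon_\sigma)$. The last inclusion makes Proposition~\ref{propnabla} applicable, giving the $(\nabla)(\mathcal J_\lambda,\mathbb H_{k-1}\oplus\mathbb P_{k+m-1},a,b)$ condition, that is exactly $(\nabla)(\mathcal I,X_1\oplus X_3,a,b)$.

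With the linking inequality $a'<a<a''$, the condition $b>\sup_\Delta\mathcal J_\lambda$, the $(\nabla)$-condition on $[a,b]$ and the Palais--Smale condition all verified, the first part of Theorem~\ref{thmarinosaccon} produces two critical points $u_1,u_2\in\mathcal J_\lambda^{-1}([a,b])$. Since $a>0=\mathcal J_\lambda(0)$, both are nontrivial and satisfy $\mathcal J_\lambda(u_i)>0$. For the upper bound I would read off the minimax characterization of these levels: they arise as $\inf$--$\sup$ values over deformations of $\Delta$ that fix $T$ (this is where the classical Linking Theorem \cite[Theorem~5.3]{rabinowitz} enters), and hence do not exceed the initial value $\sup_\Delta\mathcal J_\lambda\le\sup_{u\in\mathbb H_{k+m-1}}\mathcal J_\lambda(u)$, giving the claimed bound. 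I expect the main obstacle to be the bookkeeping that makes the three level constraints compatible at once: the $(\nabla)$-condition is available only below the fixed threshold $\varepsilon_\sigma$, whereas the linking geometry a priori controls only the sign of the supremum on $T$. The device reconciling them is Lemma~\ref{limite}, which drives $\sup_{\mathbb H_{k+m-1}}\mathcal J_\lambda$ --- and with it $a''$ and $\sup_\Delta\mathcal J_\lambda$ --- below $\varepsilon_\sigma$ as $\lambda\uparrow\lambda_k$, so that the whole window $[a,b]$ can be squeezed inside $(0,\varepsilon_\sigma)$.
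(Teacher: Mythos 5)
Your proposal is correct and follows essentially the same route as the paper: the same splitting $X_1=\mathbb H_{k-1}$, $X_2=\mathrm{span}\{e_k,\dots,e_{k+m-1}\}$, $X_3=\mathbb P_{k+m-1}$, with Proposition~\ref{propgeometria} for the linking geometry, Proposition~\ref{lemmaPS} for Palais--Smale, Proposition~\ref{propnabla} for the $(\nabla)$-condition below the threshold $\varepsilon_\sigma$, and Lemma~\ref{limite} to squeeze $\sup_{\mathbb H_{k+m-1}}\mathcal J_\lambda$ (hence the whole window $[a,b]$) into $(0,\varepsilon_\sigma)$ for $\lambda$ in a left neighborhood of $\lambda_k$. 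Your explicit bookkeeping of $a$ and $b$ is if anything slightly more careful than the paper's, which obtains the upper bound on the critical levels by noting that $\varepsilon''$ is arbitrary rather than via the minimax characterization you invoke.
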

\begin{proof}
The strategy consists in applying Theorem~\ref{thmarinosaccon} to the functional~$\mathcal J_\lambda$. For this purpose, fix $\sigma>0$ and find $\ve_\sigma$ as in Proposition~\ref{propnabla}.
Then, for all $\lambda\in [\lambda_{k-1}+\sigma, \lambda_{k+m}-\sigma]$ and for every $\ve',\ve''\in(0,\ve_\sigma)$, functional~$\mathcal J_\lambda$ satisfies the $(\nabla)(\mathcal J_\lambda, \mathbb H_{k-1}\oplus \mathbb P_{k+m-1}, \varepsilon', \varepsilon'')$ condition.

By Lemma~\ref{limite} there exists $\sigma_1\leq
\sigma$ such that, if $\lambda \in (\lambda_k-\sigma_1,\lambda_k)$, then
\begin{equation}\label{epsilonsup}
\sup_{u\in \mathbb H_{k+m-1}} \mathcal J_\lambda(u)<\ve''\,.
\end{equation}
Moreover,
since $\lambda<\lambda_k$, Proposition~\ref{propgeometria} holds true. Also, $\mathcal J_\lambda$ satisfies the Palais--Smale condition at any level, by Proposition~\ref{lemmaPS}.

Then, by Theorem~\ref{thmarinosaccon}, there exist two critical points
$u_1,u_2$ of $\mathcal J_\lambda$ with
\begin{equation}\label{epsilon}
\mathcal J_\lambda(u_i)\in [\ve',\ve'']\,,
\end{equation}
$i=1,2$. In particular $u_1$ and $u_2$ are non-trivial solutions of
problem~\eqref{problemaK} such that
$$0<\mathcal J_\lambda(u_i)\leq \sup_{u\in \mathbb H_{k+m-1}} \mathcal J_\lambda(u)\,\,\,\,\, i=1,2\,,$$
since $\varepsilon''$ is arbitrary in \eqref{epsilonsup} and \eqref{epsilon}, and this ends the proof of Proposition~\ref{2soluzioni}.
\end{proof}

We are now ready to conclude with the
\begin{proof}[Proof of~Theorem~$\ref{thmain}$]
By the classical Linking Theorem (see \cite[Theorem~5.3]{rabinowitz}), for any $\lambda\in (\lambda_{k-1}, \lambda_k)$ one can prove the existence of a solution $u_3$ of problem~\eqref{problemaK} with
\begin{equation}\label{stimasup}
\mathcal J_\lambda(u_3)\geq \inf_{u\in \mathbb P_{k-1},\, \|u\|=\varrho} \mathcal J_\lambda(u)\geq \beta,
\end{equation}
for suitable $\varrho>0$ and $\beta>0$, see \cite{svlinking}.

By Lemma~\ref{limite}, we can choose $\lambda$ so close to $\lambda_k$ that
\begin{equation}\label{confr}
\sup_{u\in \mathbb H_{k+m-1}} \mathcal J_\lambda (u)< \inf_{u\in \mathbb P_{k-1},\, \|u\|=\varrho} \mathcal J_\lambda(u)\,.
\end{equation}

Hence, inequalities \eqref{stimasup}, \eqref{confr} and Proposition~\ref{2soluzioni} immediately imply that
$$\mathcal J_\lambda(u_i)\leq \sup_{u\in \mathbb H_{k+m-1}} \mathcal J_\lambda(u)<\mathcal J_\lambda(u_3)$$
and so $u_3\neq u_i,$ $i=1,2$. The proof of Theorem~\ref{thmain} is complete.
\end{proof}

\end{document}